\newtheorem{Theorem}{Theorem}[section]
\newtheorem{Lemma}[Theorem]{Lemma}
\newtheorem{Definition}[Theorem]{Definition}
\newtheorem{Proposition}[Theorem]{Proposition}
\newenvironment{Proof}[1][Proof]{\begin{trivlist}
\item[\hskip \labelsep {\bfseries #1}]}{\flushright
$\Box$\end{trivlist}}
\newcommand{\lra}{\longrightarrow}
\newcommand{\ra}{\rightarrow}
\newcommand{\sdp}{\times\kern-.2em\vrule height1.1ex depth-.05ex}
\newcommand{\epi}{\lra \kern-.8em\ra}
\newcommand{\A}{{\mathcal A}}
\newcommand{\Q}{{\mathbb Q}}
\newcommand{\Z}{{\mathbb Z}}
\newcommand{\Rnum}[1]{\uppercase\expandafter{\romannumeral #1\relax}}
\begin{document}

\title {On inner Poisson structures of a quantum cluster algebra without coefficients}
\author{Fang Li}\author{Jie Pan$^*$}

\address{Fang Li
\newline Department of Mathematics, Zhejiang University (Yuquan Campus), Hangzhou, Zhejiang 310027, P. R. China}
\email{fangli@zju.edu.cn}

\address{Jie Pan
\newline Department of Mathematics, Zhejiang University (Yuquan Campus), Hangzhou, Zhejiang 310027, P. R. China}
\email{panjie_zhejiang@qq.com}

\thanks{\textit{Mathematics Subject Classification(2010)}: 13F60, 46L65, 17B63}
\thanks{\textit{Keywords}: quantum cluster algebra, inner Poisson structure, compatible Poisson structure}
\thanks{*: Corresponding author}

\date{version of \today}

\maketitle
\bigskip
\begin{abstract}

The main aim of this article is to characterize inner Poisson structure on a quantum cluster algebra without coefficients. Mainly, we prove that inner Poisson structure on a quantum cluster algebra without coefficients is always a standard Poisson structure. In order to relate with compatible Poisson structure,  we introduce the concept of so-called locally inner Poisson structure on a quantum cluster algebra and then show it is equivalent to locally standard Poisson structure in the case without coefficients. Based on the result from \cite{LP} we obtain finally the equivalence between locally inner Poisson structure and compatible Poisson structure in this case.

\end{abstract}
\vspace{4mm}

\tableofcontents

\section{Introduction and preliminaries}
The introduction of quantum cluster algebras in \cite{BZ} is an important development of the theory of cluster algebras, which establishes a connection between cluster theory and the theory of quantum groups, see \cite{GLS} and \cite{GY}. And this is closely related to compatible Poisson structures on cluster algebras, see \cite{GSV}, \cite{N}. Moreover, in \cite{LP} we studied compatible Poisson structures on quantum cluster algebras and the theory of second quantization related to such Poisson structure.

In this article, we focus on another special kind of Poisson structures, i.e, inner Poisson structure on a quantum cluster algebra without coefficients. It is found  that an inner Poisson structure are in fact a standard Poisson structure and then it is connected to compatible Poisson structure via locally notion using the result in \cite{LP}.

We know from \cite{YYZ} that an inner Poisson structure on the path algebra of a finite connected quiver without oriented cycles is always piecewise standard. Together with our result, it shows that in some sense, on an algebra, non-commutativity conflicts with non-trivial inner Poisson structures. \\

First, we introduce some related notations and definitions.

For $n\leqslant m\in\mathbb{N}$, denote $\mathbb{T}_{n}$ the \textbf{$n$-regular tree} with vertices $t\in \mathbb{T}_{n}$.

\begin{Definition}
   (1)\;  A \textbf{quantum seed} at vertex $t\in\mathbb{T}_n$ is a triple $\Sigma=(\tilde{X}(t),\tilde{B}(t),\Lambda(t))$ such that\\
   $\bullet$\; $\tilde{B}(t)$ is an $m\times n$ integer matrix such that the principal part is skew-symmetrizable, i.e. there is a positive diagonal matrix $D$ satisfying $DB(t)$ is skew-symmetric, where $B(t)$ is the first $n$ rows of $\tilde{B}(t)$.\\
   $\bullet$\; $\Lambda(t)$ is an $m\times m$ skew-symmetric integer matrix and $(\tilde{B}(t),\Lambda(t))$ is a \textbf{compatible pair}, i.e,
   \begin{equation}\label{B^T Lambda=DO}
     \tilde{B}(t)^{\top}\Lambda(t)=\begin{pmatrix}
                                 D & O
                               \end{pmatrix}.
   \end{equation}
   $\bullet$\; The \textbf{(extended) cluster} $\tilde{X}(t)=(X_{t}^{e_{1}},X_{t}^{e_{2}},\cdots,X_{t}^{e_{n}},X^{e_{n+1}},\cdots,X^{e_{m}})$ at $t$ is an $m$-tuple satisfying
   \[X_{t}^{e}X_{t}^{f}=q^{\frac{1}{2}e\Lambda(t)f^{\top}}X_{t}^{e+f},\quad\forall e,f\in\Z^{m},\]
   where $\left\{e_{i}\right\}_{i=1}^{m}$ is the standard basis of $\mathbb{Z}^{m}$. $X_{t}^{e_{i}},i\in[1,n]$ are called \textbf{cluster variables} at $t$ while $X^{e_{i}},i\in[n+1,m]$ are called \textbf{frozen variables}.

      (2)\; For any $k\in[1,n]$, define the \textbf{mutation} $\mu_{k}$ at direction $k$ to be $\mu_{k}(\Sigma)=\Sigma^{\prime}=(\tilde{X}^{\prime},\tilde{B}^{\prime},\Lambda^{\prime})$ such that
        \begin{equation*}
          \mu_{k}(X_{t}^{e_{k}})=X_{t}^{-e_{k}+[b_{k}(t)]_{+}}+X_{t}^{-e_{k}+[-b_{k}(t)]_{+}}
        \end{equation*}
        where $b_{k}(t)$ is the $k$-th column of $\tilde{B}(t)$ and $[a]_{+}=max\left\{a,0\right\}$ pointwise for any vector $a$.
        \[\tilde{X}^{\prime}=(\tilde{X}(t))\backslash\left\{X_{t}^{e_{k}}\right\})\bigcup\left\{\mu_{k}(X_{t}^{e_{k}})\right\}.\]
        \[\tilde{B}^{\prime}=\mu_k(\tilde B(t))=(b_{ij}^{\prime})_{m\times n}\] satisfying that
        \begin{equation}\label{matrix mutation}
          b_{ij}^{\prime}=\left\{
          \begin{array}{lcr}
            -b_{ij}(t)&&if\quad i=k \quad or\quad j=k\\
            b_{ij}(t)+sgn(b_{ik}(t))[b_{ik}(t)b_{kj}(t)]_{+}&&otherwise
          \end{array}
          \right .
        \end{equation}
        And $\Lambda^{\prime}=\mu_k(\Lambda(t))=(\lambda_{ij}^{\prime})_{m\times m}$ satisfying
        \begin{equation}\label{lambdamutation}
          \lambda_{ij}^{\prime}=\left\{
          \begin{array}{lcr}
            -\lambda_{kj}(t)+\sum\limits_{l=1}^{m}[b_{lk}(t)]_{+}\lambda_{lj}(t)&&if\quad i=k\neq j\\
            -\lambda_{ik}(t)+\sum\limits_{l=1}^{m}[b_{lk}(t)]_{+}\lambda_{il}(t)&&if\quad j=k\neq i\\
            \lambda_{ij}(t)&&otherwise
          \end{array}
          \right .
        \end{equation}
\end{Definition}

Note that \ref{B^T Lambda=DO} requests $\tilde{B}(t)$ and $\Lambda(t)$ to be of full column rank. It can be verified that $\mu_{k}(\Sigma)$ is also a quantum seed and $\mu_{k}$ is an involution.

For the Laurent polynomial ring $\mathbb{Z}[q^{\pm\frac{1}{2}}]$ with a formal variable $q$, define the \textbf{quantum torus} $T_{t}$ at $t$ to be a $\mathbb{Z}[q^{\pm\frac{1}{2}}]$-algebra generated by $\tilde{X}(t)$. Denoted by $\mathscr{F}_{q}$ the skew-field of fractions of $T_{t}$. It does not depend on the choice of $t$.

\begin{Definition}
  Given seeds $\Sigma(t)=(\tilde{X}(t),\tilde{B}(t),\Lambda(t))$ at $t\in \mathbb{T}_{n}$ so that $\Sigma(t^{\prime})=\mu_{k}(\Sigma(t))$ for any $t-t^{\prime}$ in $\mathbb{T}_{n}$ connected by an edge labeled $k\in[1,n]$, then the $\mathbb{Z}[q^{\pm\frac{1}{2}}][X^{\pm e_{1}},\cdots,X^{\pm e_{m}}]$-subalgebra of $\mathscr{F}_{q}$ generated by all variables in $\bigcup\limits_{t\in\mathbb{T}_{n}}X(t)$ is called the \textbf{quantum cluster algebra} $A_{q}(\Sigma)$ (or simply $A_{q}$) associated with $\Sigma$.
\end{Definition}

A \textbf{Poisson structure} on an associative k-algebra $\A$ means a triple $(\A,\cdot,\left\{-,-\right\})$ where $(\A,\left\{-,-\right\})$ is a Lie k-algebra i.e. satisfying Jacobi identity such that the Leibniz rule holds: for any $a,b,c\in \A$,
\[\left\{a,bc\right\}=\left\{a,b\right\}c+b\left\{a,c\right\}.\]
Algebra $\A$ together with a Poisson structure on it is called a \textbf{Poisson algebra}. Denote the \textbf{Hamiltonian} of $a\in \A$ by
\[ham(a)=\left\{a,-\right\}\in End_{k}(\A,\A).\]
Then the Leibniz rule is equivalent to that $ham(a)$ is a derivation of $\A$ as an associative algebra for any $a\in \A$.
\begin{Definition}
  Let $\A$ be an associative algebra. $[a,b]=ab-ba$ is called the \textbf{commutator} of a and b, for any $a,b\in \A$. And for any $\lambda \in k$, $(\A,\cdot,\lambda[-,-])$ is a Poisson algebra called a \textbf{standard Poisson structure} on $(\A,\cdot)$.
\end{Definition}

A Poisson algebra $(\A,\cdot,\left\{-,-\right\})$ is said to be \textbf{inner} if $ham(a)=[a^{\prime},-]$ for some $a^{\prime}\in \A$, i.e. it is an inner derivation.\par
As a natural generalization of standard Poisson algebras, inner Poisson structures often arise, for examples: For an associative algebra $\A$,\\
(1)  If the first Hochschild cohomology of $(\A,\cdot)$ vanishes, then any Poisson structure on it is inner, see\cite{G}.\\
(2)  $\left(\right.$\cite{YYZ}$\left.\right)$
 Let $\left(\A,\cdot\right)$ be an associative algebras. Then a Poisson bracket $\left\{-,-\right\}$ on $\left(\A,\cdot\right)$ is an inner Poisson bracket if and only if there is  a $k$-linear transformation $g$ of $\A$ satisfying $ham(a)=[g(a),-]$ for any $a \in \A$ and
\begin{equation}\label{YYZ1}
  [g(x),y]=[x,g(y)], \forall x,y \in \A,
\end{equation}
\begin{equation}\label{YYZ2}
  [g(x),g(y)]-g([g(x),y]) \in Z(\A), \forall x,y \in \A.
\end{equation}

Furthermore, for any inner Poisson bracket $\left\{-,-\right\}$ on $\left(\A,\cdot\right)$, we can always find a $k$-linear transformation $g_0$ of $\A$ satisfying the above equations and meantime,
\begin{equation}\label{YYZ3}
  Z(\A)\subseteq Ker(g_0),
\end{equation}
where $Z(\A)$ is the center of the Lie bracket $[-,-]$.

Moreover, it is proved in \cite{YYZ} that for a finite connected quiver $Q$ without oriented cycles,
\[kQ=k\cdot 1\oplus \bigoplus \limits_{1\leqslant i\leqslant m}I_{i},\]
is a decomposition into indecomposable ideals of the Lie algebra $(kQ,[-,-])$. Furthermore, if $\left\{-,-\right\}$ is an inner Poisson structure on the path algebra $kQ$, then there is a unique vector $(\lambda_{1},\cdots,\lambda_{m})\in k^{m}$ such that
\begin{equation}\label{4}
  ham(a)=\lambda_{i}[a,-],\text{for any } a\in I_{i}, 1\leqslant i\leqslant m.
\end{equation}
Conversely, for any vector $(\lambda_{1},\cdots,\lambda_{m})\in k^{m}$, there is a unique inner Poisson structure on $kQ$ (up to a Poisson algebra isomorphism) satisfying (\ref{4}).

From now, let $\mathscr{P}(\A)$ be the set of the $k$-linear transformations of $A$ satisfying (\ref{YYZ1}), (\ref{YYZ2}). Define an equivalence relation $\sim$ on $\mathscr{P}(\A): g \sim g'$ if and only if there exists $\tau \in Aut(\A,\cdot)$ such that Im($\tau g \tau^{-1} - g')\subseteq Z(\A)$. Denote by [$g$] the equivalence class of $g$.

Two Poisson structures on $(\A,\cdot)$ are called {\bf isomorphic} as Poisson algebras if there exists an associative algebra automorphism $\tau$ of $(\A,\cdot)$ such that it is also a Lie algebra homomorphism. Denote by [$(\A,\cdot,\{-,-\})$] the iso-class of $(\A,\cdot,\{-,-\})$.

The paper is organized as follows.

In Section 2 we discuss the inner Poisson structures on a quantum cluster algebra without coefficients and prove the main theorem.
\begin{Theorem}
(Theorem \ref{r1})\;Let $A_{q}$ be a quantum cluster algebra without coefficients, any inner Poisson structure on $A_{q}$ must be a standard Poison structure.
\end{Theorem}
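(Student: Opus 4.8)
The plan is to exploit the characterization of inner Poisson brackets recalled in item (2) above: an inner Poisson structure on $A_q$ is encoded by a $k$-linear map $g$ with $ham(a)=[g(a),-]$ satisfying (\ref{YYZ1}) and (\ref{YYZ2}), so that $\{a,b\}=[g(a),b]$. The goal then reduces to proving that $g(a)-\lambda a$ is central for a single scalar $\lambda$, since then $\{a,b\}=\lambda[a,b]$ is standard. The first point to establish is that in the coefficient-free case $m=n$ the compatibility relation (\ref{B^T Lambda=DO}) reads $B(t)^{\top}\Lambda(t)=D$ with $D$ invertible, forcing both $B(t)$ and $\Lambda(t)$ to be invertible; in particular $\Lambda(t)$ is a nondegenerate skew-symmetric form (so $n$ is even and $n\geq 2$). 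A short computation with $[X^e,X^f]=\bigl(q^{\frac{1}{2}e\Lambda(t)f^{\top}}-q^{-\frac{1}{2}e\Lambda(t)f^{\top}}\bigr)X^{e+f}$ then shows, using that $q$ is a formal non-root-of-unity variable, that the only central monomials of the quantum torus $T_t$ are the scalars $\Z[q^{\pm\frac{1}{2}}]$.

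The technical core is to determine $g$ on the monomials $X^e$ of a fixed seed. Working inside $T_t$ I would expand $g(X^e)=\sum_h g^e_h X^h$ and feed the commutation formula into (\ref{YYZ1}), namely $[g(X^e),X^f]=[X^e,g(X^f)]$. Writing $d=h-e$ for the shift and comparing the coefficient of each monomial, this collapses to one family of scalar identities relating $g^e_{e+d}$ to $g^f_{f+d}$ through the pairings $e\Lambda f^{\top}$, $d\Lambda f^{\top}$ and $e\Lambda d^{\top}$. Setting $f=e$ forces $d$ to be $\Lambda$-orthogonal to $e$ whenever $g^e_{e+d}\neq 0$; substituting this back and varying $f$, a linear-functional argument combined with nondegeneracy of $\Lambda$ shows that the only admissible nonzero shift is $d=-e$. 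Hence $g(X^e)=\lambda_t X^e+c_e$ with $c_e$ a scalar, and the diagonal coefficient $\lambda_t=g^e_e$ is forced to be independent of $e$ by the connectedness of the graph on $\Z^n_{\geq 0}\setminus\{0\}$ in which $e$ and $f$ are joined when $e\Lambda f^{\top}\neq 0$ (again guaranteed by nondegeneracy).

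It remains to globalize. The same analysis applies verbatim in every seed $t'$, giving $g(X_{t'}^e)=\lambda_{t'}X_{t'}^e+(\text{scalar})$. Since adjacent seeds, related by a single $\mu_k$, share the $n-1$ unmutated cluster variables and $n\geq 2$, matching $g$ on a shared variable forces $\lambda_t=\lambda_{t'}$, so the connectedness of $\mathbb{T}_n$ produces one common $\lambda$. Finally every cluster variable $X$ of every seed satisfies $\{X,Y\}=[g(X),Y]=\lambda[X,Y]$, the scalar correction being central; as both $\{-,-\}$ and $\lambda[-,-]$ are biderivations (the Leibniz rule holds in each slot by skew-symmetry) and the cluster variables generate $A_q$, the two brackets agree on all of $A_q$.

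I expect the main obstacle to be the off-diagonal vanishing step: ruling out every shift $d\neq 0,-e$ purely from (\ref{YYZ1}), and doing so correctly for $g(X^e)$ regarded as an element of $A_q$ expanded in the ambient torus $T_t$. A related subtlety is that the monomial comparison only directly accesses the non-negative cone $\Z^n_{\geq 0}$ of each seed, so the linear-functional and connectivity arguments must be run over that cone rather than all of $\Z^n$; once this is handled, the identity (\ref{YYZ2}) is not needed for the present implication, since standard brackets automatically satisfy the Jacobi identity.
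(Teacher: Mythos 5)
Your proposal is correct in substance and shares the paper's overall skeleton --- reduce to a $k$-linear map $g$ with $ham(a)=[g(a),-]$ via the correspondence of Theorem \ref{g}, expand $g$ on monomials inside a quantum torus, compare coefficients in (\ref{YYZ1}), and propagate a single scalar by a connectivity argument --- but it executes the two decisive steps by genuinely different means, and both of your routes work. At the core, where the paper (Lemma \ref{inner} followed by Lemma \ref{I2}) first pins down the exponents occurring in $g(X_h)$ as ``partial copies'' of rows of $\Lambda$ and then kills the unwanted terms by constructing $n$ linearly independent exponent vectors satisfying the conditions (\ref{m}) through a cone-and-subspace-avoidance argument, your off-diagonal vanishing closes in a few lines once the compressed step is written out: if $g^e_{e+d}\neq 0$ with $d\neq 0,-e$, then for every $f\in\Z^n_{\geq 0}$ with $d\Lambda f^{\top}\neq 0$ the diagonal constraint applied to $f$ (not to $e$) gives $g^f_{f+d}=0$, so the cross equation forces $(e+d)\Lambda f^{\top}=0$; hence $\Z^n_{\geq 0}\subseteq\{f:\, d\Lambda f^{\top}=0\}\cup\{f:\,(e+d)\Lambda f^{\top}=0\}$, a union of two proper hyperplanes by nondegeneracy of $\Lambda$, which is impossible since the cone is not covered by finitely many hyperplanes. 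This is the same underlying fact the paper invokes, but used far more economically. Your globalization is also different: the paper never leaves the initial seed, because under its definition the initial quantum torus sits inside $A_q$, so \emph{all} Laurent monomials (negative exponents included) lie in $A_q$ and span it, and determining $g$ on them up to center immediately yields $ham(W)=k_0[W,-]$ on all of $A_q$ (Lemma \ref{I1} plus Theorem \ref{g}); your per-seed analysis over the non-negative cone, glued along the $n-1$ shared cluster variables of mutation-adjacent seeds and then extended by the Leibniz rule from the generating cluster variables, costs an extra step but makes the proof insensitive to that convention --- it works even if only non-negative monomials of each cluster belong to $A_q$. Your remark that (\ref{YYZ2}) is never needed is also accurate; the paper's computations likewise use only (\ref{YYZ1}).
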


Then we generalize the definition to locally inner Poisson structures and find following equivalence.

\begin{Theorem}
  (Theorem \ref{r3})\;Let $A_{q}$ be a quantum cluster algebra without coefficients and $\left\{-,-\right\}$ a Poisson structure on $A_{q}$. The following statements are equivalent:

  (1)\; $\left\{-,-\right\}$ is locally standard.

  (2)\; $\left\{-,-\right\}$ is locally inner.

  (3)\; $\left\{-,-\right\}$ is compatible with $A_{q}$.
\end{Theorem}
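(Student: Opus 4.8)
The plan is to establish the equivalences $(1)\Leftrightarrow(2)$ and $(1)\Leftrightarrow(3)$ separately; since both are anchored at local standardness, the three conditions then coincide. The implication $(1)\Leftrightarrow(2)$ is the genuinely new content, proved by localizing Theorem \ref{r1}, whereas $(1)\Leftrightarrow(3)$ is imported from \cite{LP}.

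The implication $(1)\Rightarrow(2)$ is the easy half. Suppose $\{-,-\}$ is locally standard, so that on the local piece attached to each seed $t\in\mathbb{T}_n$ the bracket is $\lambda_t[-,-]$ for some scalar $\lambda_t\in k$. Then for any element $a$ of that piece the Hamiltonian is $ham(a)=\lambda_t[a,-]=[\lambda_t a,-]$, an inner derivation with witness $\lambda_t a$; checking (\ref{YYZ1}) and (\ref{YYZ2}) for the linear map $g_t:a\mapsto\lambda_t a$ is immediate, since $[\lambda_t x,y]=\lambda_t[x,y]=[x,\lambda_t y]$ and $[g_t(x),g_t(y)]-g_t([g_t(x),y])=\lambda_t^2[x,y]-\lambda_t^2[x,y]=0\in Z(\A)$. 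Hence the structure is locally inner and no appeal to Theorem \ref{r1} is needed here.

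The implication $(2)\Rightarrow(1)$ is the crux and where I expect the main obstacle. The strategy is to run the inner-implies-standard mechanism underlying Theorem \ref{r1} one seed at a time. Fixing $t$, the coefficient-free hypothesis $m=n$ means the local data at $t$ is governed by the single quantum torus $T_t$, on which the argument behind Theorem \ref{r1} applies, forcing a locally inner bracket to be standard there with a seed-dependent scalar $\lambda_t$. Two points need care. First, one must confirm that ``locally inner'' supplies an honest inner Poisson bracket on each local piece, i.e. a genuine linear transformation $g_t$ with $ham(a)=[g_t(a),-]$ satisfying (\ref{YYZ1})--(\ref{YYZ2}) intrinsically, rather than a bracket merely inherited from $A_q$ whose center sits differently. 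Second --- and this is the real difficulty --- the scalars $\lambda_t$ extracted seed-by-seed must be reconciled across overlapping clusters. I would propagate $\lambda_t$ through a single mutation $\mu_k$, using the exchange relation $\mu_k(X_t^{e_k})=X_t^{-e_k+[b_k(t)]_+}+X_t^{-e_k+[-b_k(t)]_+}$ together with the mutation rules (\ref{matrix mutation})--(\ref{lambdamutation}), and show that agreement of the two local brackets on the $n-1$ shared variables, combined with the Leibniz rule applied to this exchange relation, determines $\lambda_{t'}$ from $\lambda_t$; connectedness of $\mathbb{T}_n$ then yields a coherent local standardness.

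Finally, $(1)\Leftrightarrow(3)$ I would import rather than reprove. In \cite{LP} compatibility of a Poisson bracket with $A_q$ is characterized by the bracket being log-canonical on every cluster, which is precisely local standardness in the present sense, since $[X_t^{e},X_t^{f}]$ is itself log-canonical. Thus $(1)$ and $(3)$ are two names for the same condition, and together with $(1)\Leftrightarrow(2)$ this closes the three-way equivalence.
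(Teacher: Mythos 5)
Your proposal rests on a misreading of what ``locally'' means in this paper, and that misreading derails the central implication. The local pieces are not attached to seeds $t\in\mathbb{T}_{n}$: they are the indecomposable factors $A_{q,I_{1}},\dots,A_{q,I_{r}}$ of the cluster decomposition $A_{q}\cong\bigsqcup_{i=1}^{r}A_{q,I_{i}}$ coming from the block decomposition $\tilde{B}=\bigoplus_{i}\tilde{B}_{I_{i}}$ of the exchange matrix; in the coefficient-free case this is an honest tensor decomposition $A_{q}=\bigotimes_{i}A_{q,I_{i}}$ because $\Theta=O$. ``Locally inner'' means every Hamiltonian restricts on each factor $A_{q,I_{i}}$ to an inner derivation with witness in $A_{q,I_{i}}$, and ``locally standard'' means the bracket is $a_{r}[-,-]$ on each factor \emph{and vanishes on pairs of variables from different factors}. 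Consequently your plan for $(2)\Rightarrow(1)$ --- extract a scalar $\lambda_{t}$ at each seed, propagate it through a mutation via the exchange relation, and invoke connectedness of $\mathbb{T}_{n}$ --- is aimed at a statement that is not the one being proved; no seed-by-seed reconciliation of scalars occurs anywhere in the paper, and it could not, since the ``pieces'' you work with (quantum tori at seeds) are not even subalgebras of $A_{q}$ on which the bracket restricts. The paper's actual argument (Proposition \ref{r2}) observes that each factor $A_{q,I_{i}}$ is itself a coefficient-free quantum cluster algebra, applies Theorem \ref{r1} to the restricted inner bracket to get $ham(a)|_{A_{q,I_{i}}}=\lambda_{i}[a,-]$ on that factor, and then settles the part your proposal never addresses: the cross-factor brackets. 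For $a\in A_{q,I_{i}}$, $a^{\prime}\in A_{q,I_{j}}$ with $i\neq j$, local innerness gives $\{a,a^{\prime}\}=[a_{j},a^{\prime}]\in A_{q,I_{j}}$ and also $\{a,a^{\prime}\}=[a,a_{i}^{\prime}]\in A_{q,I_{i}}$, hence $\{a,a^{\prime}\}\in A_{q,I_{i}}\cap A_{q,I_{j}}=\Z[q^{\pm\frac{1}{2}}]$; since commutators of quantum-torus elements carry no constant term, $\{a,a^{\prime}\}=0$. Without this vanishing step one simply does not obtain local standardness, whose definition includes that condition.

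Two further inaccuracies. Your $(1)\Rightarrow(2)$ check is fine in spirit once ``local piece'' is reinterpreted correctly --- this is the direction the paper dismisses as evident --- but note that the definition of locally inner requires a witness $a_{i}\in A_{q,I_{i}}$ for every $a\in A_{q}$ and every $i$; your scalar map only handles $a$ inside one piece, and for $a\in A_{q,I_{i}}$ with $j\neq i$ the witness is $0$, which is exactly where the cross-factor vanishing in the definition of locally standard gets used. More seriously, your claim that $(1)\Leftrightarrow(3)$ holds because compatibility and local standardness ``are two names for the same condition'' is wrong: compatibility means log-canonicity of \emph{every} cluster, and its equivalence with local standardness is a genuine theorem (Theorem \ref{compatible}, quoted from \cite{LP}), whose nontrivial direction is that log-canonicity on all clusters forces the bracket to be a scalar multiple of the commutator on each indecomposable factor and zero across factors. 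Importing it from \cite{LP}, as the paper does, is legitimate; presenting it as a definitional identity is not.
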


\vspace{4mm}
\section{Proof of the main theorem}
The following theorem from \cite{YYZ} gives a correspondence between inner Poisson brackets and k-linear transformations.
\begin{Theorem}[\cite{YYZ}]\label{g}
  Let $\left(\A,\cdot\right)$ be an associative algebras. Then the map
  \begin{equation*}
  \left \{\text{the equivalence classes of }\mathscr{P}(\A) \right \} \rightarrow \left \{\text{the isoclasses of inner Poisson structures on }(\A,\cdot) \right \}
  \end{equation*}
  given by
  \begin{equation*}
  [g] \mapsto [\left(\A,\cdot,\{-,-\}\right)],\; \text{where}\; ham(a)=[g(a),-], \forall a \in \A
  \end{equation*}
  is bijective.
\end{Theorem}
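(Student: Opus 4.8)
The plan is to show the displayed assignment is well defined, surjective and injective, writing $\{a,b\}_g:=[g(a),b]$ for the bracket attached to a linear map $g$. The entry point is the characterization recalled in item (2) of the preliminaries: the bilinear operation $\{-,-\}_g$ is a Poisson bracket exactly when $g\in\mathscr{P}(\A)$, and every such bracket is automatically inner since $ham(a)=[g(a),-]$ is by construction an inner derivation. I would first record why the axioms reduce to (\ref{YYZ1}) and (\ref{YYZ2}): the Leibniz rule holds for free because $[g(a),-]$ is a derivation; skew-symmetry is equivalent to (\ref{YYZ1}), as $\{a,b\}_g+\{b,a\}_g=[g(a),b]+[g(b),a]=[g(a),b]-[a,g(b)]$ vanishes for all $a,b$ iff $[g(a),b]=[a,g(b)]$; and the Jacobi identity unwinds to (\ref{YYZ2}). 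This last equivalence is the one genuinely computational point, and since Theorem \ref{g} is quoted from \cite{YYZ} I would take it from there rather than reprove it.

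The technical core is a single biconditional, which I would isolate as a lemma: for $\tau\in Aut(\A,\cdot)$ and $g,g'\in\mathscr{P}(\A)$, the automorphism $\tau$ is an isomorphism of Poisson algebras $(\A,\cdot,\{-,-\}_g)\to(\A,\cdot,\{-,-\}_{g'})$ if and only if $\mathrm{Im}(\tau g\tau^{-1}-g')\subseteq Z(\A)$. This is proved by expanding both sides. Because $\tau$ preserves products it preserves commutators, so $\tau(\{a,b\}_g)=[\tau g(a),\tau(b)]$ while $\{\tau(a),\tau(b)\}_{g'}=[g'\tau(a),\tau(b)]$; these coincide for all $a,b$ iff $[\tau g(a)-g'\tau(a),\tau(b)]=0$ for all $a,b$, and since $\tau$ is surjective $\tau(b)$ ranges over $\A$, so this says exactly $\tau g(a)-g'\tau(a)\in Z(\A)$ for every $a$, i.e. $\mathrm{Im}(\tau g\tau^{-1}-g')\subseteq Z(\A)$.

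Observe that the condition in the lemma is verbatim the defining relation of $\sim$. Hence $g\sim g'$ holds if and only if $(\A,\cdot,\{-,-\}_g)$ and $(\A,\cdot,\{-,-\}_{g'})$ are isomorphic as Poisson algebras, which simultaneously yields well-definedness (equivalent maps give the same iso-class) and injectivity (the same iso-class forces the maps to be equivalent); here one uses that the inverse of a Poisson isomorphism is again one, so the direction of the isomorphism is immaterial. The specialization $\tau=\mathrm{id}$ is worth noting: it shows that two maps inducing literally the same bracket, namely those with $\mathrm{Im}(g-g')\subseteq Z(\A)$, are already equivalent, so the bracket depends only on the class $[g]$. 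That $\sim$ is genuinely an equivalence relation (reflexive via $\mathrm{id}$, symmetric via $\tau\mapsto\tau^{-1}$, transitive via composition) is a short check that repeatedly uses the fact that an algebra automorphism maps $Z(\A)$ onto $Z(\A)$. Surjectivity is then immediate from the existence statement in item (2): every inner Poisson bracket can be presented as $ham(a)=[g_0(a),-]$ with $g_0\in\mathscr{P}(\A)$ (one may even impose $Z(\A)\subseteq\mathrm{Ker}\,g_0$), so its iso-class is the image of $[g_0]$.

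In summary, once the dictionary of item (2) between Poisson brackets and $\mathscr{P}(\A)$ is in hand, the whole theorem rests on the lemma above, whose proof is a direct commutator manipulation; the only step I expect to require real work — and the reason the statement is attributed to \cite{YYZ} — is the equivalence between the Jacobi identity for $\{-,-\}_g$ and condition (\ref{YYZ2}). Accordingly I would invest the verification effort in matching conventions so that the lemma applies in exactly the form in which the relation $\sim$ is written, and otherwise lean on the cited characterization.
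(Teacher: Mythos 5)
Your proposal is correct, but there is nothing in the paper to compare it against: Theorem \ref{g} is stated as a quotation from \cite{YYZ} and used as a black box, with no proof given in the paper itself. Your reconstruction is sound. The key lemma --- that an algebra automorphism $\tau$ is a Poisson isomorphism from $(\A,\cdot,\{-,-\}_g)$ to $(\A,\cdot,\{-,-\}_{g'})$ if and only if $\mathrm{Im}(\tau g\tau^{-1}-g')\subseteq Z(\A)$ --- is proved correctly by your commutator expansion, and it does exactly the two jobs you assign to it: constancy of the assignment on $\sim$-classes (well-definedness) and injectivity; surjectivity is indeed the existence half of the characterization from \cite{YYZ} recalled in item (2) of the preliminaries. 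The only step you defer to the citation, the equivalence of the Jacobi identity with (\ref{YYZ2}), is shorter than you suggest: granting (\ref{YYZ1}), the commutator Jacobi identity together with two applications of (\ref{YYZ1}) gives
\[
\{a,\{b,c\}_g\}_g+\{b,\{c,a\}_g\}_g+\{c,\{a,b\}_g\}_g=\bigl[\,[g(a),g(b)]-g([g(a),b]),\,c\,\bigr],
\]
so the Jacobi identity for $\{-,-\}_g$ holds for all $a,b,c$ if and only if $[g(a),g(b)]-g([g(a),b])\in Z(\A)$ for all $a,b$, which is precisely (\ref{YYZ2}). With that computation written out, your argument is a complete, self-contained proof of the cited theorem.
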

Because of the above theorem, we can focus on the $k$-linear transformations when studying inner Poisson structures. In this section, we study the inner Poisson structures of a quantum cluster algebra $A_{q}$ with deformation matrix $\Lambda$.

Because $g$ is $k$-linear, we only need to think about its action on Laurent monomials in $A_{q}$. In this section when we say Laurent monomials, we actually mean Laurent monomials in the initial cluster.
\begin{Lemma}\label{inner} For a quantum cluster algebra $A_q$, if  $g \in \mathscr{P}(A_{q})$, then for any $h\in[1,m]$ and any cluster $\tilde{X}=\{X_1,\cdots,X_n,X_{n+1},\cdots,X_m\}$,  we have
   \[g(X_{h})=k_{1}^{h}X_{h}+\sum \limits_{i=2}^{l_{h}}k_{i}^{h}X_{1}^{a_{i1}^{h}}X_{2}^{a_{i2}^{h}}\cdots X_{m}^{a_{im}^{h}},\]
    which is expanded in a $\mathbb{Z}[q^{\pm \frac{1}{2}}]$-linearly independent form, with $l_{h}\in \mathbb{N}$, $a_{i1}^{h},\cdots,a_{im}^{h}\in \mathbb{Z}$ and $k_1^h, k_{i}^{h} \in \mathbb{Z}[q^{\pm \frac{1}{2}}]$ for $2\leqslant i \leqslant l_{h}$, satisfying that
    \begin{equation}\label{solution}
    (a_{i1}^{h},a_{i2}^{h},\ldots,a_{im}^{h})\Lambda =(\lambda_{h1}^{i},\lambda_{h2}^{i},\ldots,\lambda_{hm}^{i})
    \end{equation}
    where $\lambda_{hp}^{i}= 0$ or $\lambda_{hp}$ for $1\leq p\leq m$.
\end{Lemma}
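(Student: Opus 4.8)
The plan is to work inside the quantum torus at the given cluster $\tilde{X}$ and to read off the constraint (\ref{solution}) purely from the antisymmetry condition (\ref{YYZ1}); condition (\ref{YYZ2}) is not needed here. First I would invoke the quantum Laurent phenomenon to write $g(X_{h})\in A_q$ uniquely as a finite $\mathbb{Z}[q^{\pm\frac{1}{2}}]$-combination $g(X_{h})=\sum_{a}k_a^{(h)}X^a$ in the monomial basis $\{X^a=X_1^{a_1}\cdots X_m^{a_m}\}$, and likewise expand each $g(X_{j})$. The only computational input is the commutator of monomials, $[X^a,X^b]=\big(q^{\frac{1}{2}a\Lambda b^{\top}}-q^{-\frac{1}{2}a\Lambda b^{\top}}\big)X^{a+b}$, which is a $\mathbb{Z}[q^{\pm\frac{1}{2}}]$-multiple of $X^{a+b}$ vanishing exactly when $a\Lambda b^{\top}=0$; throughout I write $(a\Lambda)_p$ for the $p$-th entry of the row vector $a\Lambda$ and record the skew-symmetry identity $e_h\Lambda c^{\top}=-(c\Lambda)_h$.

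The first key step is the diagonal case $x=y=X_{h}$ of (\ref{YYZ1}), which forces $[g(X_{h}),X_{h}]=0$ (as $2[g(X_{h}),X_{h}]=0$ and $\mathbb{Z}[q^{\pm\frac{1}{2}}]$ is torsion-free). Expanding, $[g(X_{h}),X_{h}]=\sum_a k_a^{(h)}\big(q^{\frac{1}{2}(a\Lambda)_h}-q^{-\frac{1}{2}(a\Lambda)_h}\big)X^{a+e_h}$; since distinct monomials are $\mathbb{Z}[q^{\pm\frac{1}{2}}]$-linearly independent and $q$ is a formal variable over an integral domain, every surviving term must satisfy $(a\Lambda)_h=0$. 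This pins down the $p=h$ coordinate of (\ref{solution}) (recall $\lambda_{hh}=0$) and, crucially, the identical statement holds for each $g(X_{j})$: every monomial $X^{a'}$ occurring in $g(X_{j})$ has $(a'\Lambda)_j=0$.

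Next I would take the general case $x=X_{h}$, $y=X_{j}$ of (\ref{YYZ1}) and compare the coefficient of each monomial $X^c$ on both sides of $[g(X_{h}),X_{j}]=[X_{h},g(X_{j})]$. Using $\lambda_{jj}=\lambda_{hh}=0$ and the skew-symmetry identity, this yields $k_{c-e_j}^{(h)}\big(q^{\frac{1}{2}(c\Lambda)_j}-q^{-\frac{1}{2}(c\Lambda)_j}\big)=-\,k_{c-e_h}^{(j)}\big(q^{\frac{1}{2}(c\Lambda)_h}-q^{-\frac{1}{2}(c\Lambda)_h}\big)$ for every $c$. Specializing $c=a+e_j$ for a monomial $X^a$ of $g(X_{h})$ and inserting $(a\Lambda)_h=0$ from the previous step, this collapses to $k_a^{(h)}\big(q^{\frac{1}{2}(a\Lambda)_j}-q^{-\frac{1}{2}(a\Lambda)_j}\big)=-\,k_{a+e_j-e_h}^{(j)}\big(q^{\frac{1}{2}\lambda_{jh}}-q^{-\frac{1}{2}\lambda_{jh}}\big)$.

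Finally I would fix a monomial $X^a$ with $k_a^{(h)}\neq0$ and run the case analysis on this last relation for each $j$. If $\lambda_{jh}=0$ the right side vanishes, forcing $(a\Lambda)_j=0=\lambda_{hj}$. If $\lambda_{jh}\neq0$ and $(a\Lambda)_j\neq0$, the left side is nonzero, so $k_{a+e_j-e_h}^{(j)}\neq0$, i.e. $X^{a+e_j-e_h}$ occurs in $g(X_{j})$; applying the diagonal vanishing to $g(X_{j})$ gives $((a+e_j-e_h)\Lambda)_j=0$, hence $(a\Lambda)_j=\lambda_{hj}$. In every case $(a\Lambda)_j\in\{0,\lambda_{hj}\}$, which together with the $p=h$ coordinate is exactly (\ref{solution}); separating the term $a=e_h$ as $k_1^h X_h$ and relabelling the rest gives the asserted form, and the argument is identical at any cluster since the quantum torus and the relations above are available at every vertex. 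I expect the main obstacle to be precisely this last case $\lambda_{jh}\neq0$: there the constraint does not follow from $g(X_{h})$ alone but must be transferred through the cross-relation to $g(X_{j})$ and pinned down by the diagonal-vanishing property of \emph{that} image, so the delicate point is the bookkeeping of exponents under the skew-symmetry of $\Lambda$ together with the simultaneous use of the expansions of all the $g(X_{j})$.
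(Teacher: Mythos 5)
Your proposal is correct and takes essentially the same route as the paper's own proof: both first extract the diagonal constraint $(a\Lambda)_h=0$ from $[g(X_h),X_h]=0$, then compare coefficients in the cross-relation $[g(X_h),X_j]=[X_h,g(X_j)]$ coming from (\ref{YYZ1}), splitting into the vanishing-coefficient case (which forces $(a\Lambda)_j=0$) and the matched case, where the diagonal property of $g(X_j)$ pins down $(a\Lambda)_j=\lambda_{hj}$. Your exponent-indexed coefficient comparison is simply a cleaner packaging of the paper's ``one-by-one correspondence'' between the nonzero monomials of the two expansions; the mathematical content is identical.
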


\begin{Proof}
  Assume $g(X_{1})=\sum \limits_{i=1}^{l_{1}}k_{i}X_{1}^{a_{i1}}X_{2}^{a_{i2}}\cdots X_{m}^{a_{im}},k_{i}\neq 0$ and  $g(X_{2})=\sum \limits_{i=1}^{l_{2}}p_{i}X_{1}^{b_{i1}}X_{2}^{b_{i2}}\cdots X_{m}^{b_{im}},p_{i}\neq 0$,  are expanded in  $\mathbb{Z}[q^{\pm \frac{1}{2}}]$-linearly independent  forms. And assume $\left\{-,-\right\}$ is the inner Poisson bracket corresponding to $g$, i.e. $\left\{X,Y\right\}=[g(X),Y]$ for any $X,Y\in A_{q}$.

  Because $0=\{X_{1},X_{1}\}=[g(X_{1}),X_{1}]=\sum \limits_{i=1}^{l_{1}}(q^{\sum \limits_{t=1}^{m} a_{it}\lambda_{t1}}-1)X_{1}^{a_{i1}+1}X_{2}^{a_{i2}}\cdots X_{m}^{a_{im}}$, we have
  \[\sum \limits_{t=1}^{m} a_{it}\lambda_{t1}= 0 \;\;\; \text{for any}\;\;\; 1< i< l_1.\]
  Similarly,
   \[ \sum \limits_{t=1}^{m} b_{it}\lambda_{t2}= 0 \;\;\; \text{for any}\;\;\; 1< i< l_2.\]

   Moreover, according to (\ref{YYZ1}), we then obtain that
  \begin{equation}\label{expand1}
  \{X_{1},X_{2}\}=[g(X_{1}),X_{2}]=\sum \limits_{i=1}^{l_1}k_{i}(q^{\sum \limits_{t=2}^{m}a_{it}\lambda_{t2}}-q^{a_{i1}\lambda_{21}})X_{1}^{a_{i1}}X_{2}^{a_{i2}+1}X_{3}^{a_{i3}}\cdots X_{m}^{a_{im}};
  \end{equation}
  \begin{equation}\label{expand2}
  \{X_{1},X_{2}\}=[X_{1},g(X_{2})]=\sum \limits_{i=1}^{l_2}p_{i}(1-q^{\sum \limits_{t=1}^{m}b_{it}\lambda_{t1}})X_{1}^{b_{i1}+1}X_{2}^{b_{i2}}X_{3}^{b_{i3}}\cdots X_{m}^{b_{im}}.
  \end{equation}

 Trivially,  the expansions of the right-sides of (\ref{expand1}) and (\ref{expand2}) are also in $\mathbb{Z}[q^{\pm \frac{1}{2}}]$-linearly independent forms,  which are the same due to the algebraic independence of $\{X_1, X_2, \cdots, X_m\}$. Hence there exists $l_0\leq l_1,l_2$ such that there are $l_0$ monomials with non-zero-coefficients in the expansions of the right-sides of (\ref{expand1}) and (\ref{expand2}) respectively and the coefficients of other monomials are all zeros.

 Without loss of generality, suppose these $l_0$ monomials with non-zero-coefficients are just the first $l_0$ ones in the expansions of the right-sides of (\ref{expand1}) and (\ref{expand2}) respectively. We may assume they are in one-by-one correspondence indexed by $i=1,2,\cdots,l_0$. Hence due to the above discussion, we obtain that

  Case 1: For $i$ satisfying $1\leqslant i\leqslant l_{0}$,
  \begin{equation*}
    \left \{
    \begin{array}{l}
        a_{i1}=b_{i1}+1 \\
      a_{i2}+1=b_{i2} \\
      a_{it}=b_{it}, \; \text{for}\; 3\leqslant t\leqslant m\\
          k_{i}(q^{\sum \limits_{t=2}^{m}a_{it}\lambda_{t2}}-q^{a_{i1}\lambda_{21}})= p_{i}(1-q^{\sum \limits_{t=1}^{m}b_{it}\lambda_{t1}})\neq 0\\
      \sum \limits_{t=1}^{m} a_{it}\lambda_{t1}= \sum \limits_{t=1}^{m} b_{it}\lambda_{t2}= 0
    \end{array}
    \right .
  \end{equation*}

  Case 2: For $i,j$ satisfying $l_{0}<i\leqslant l_{1}$, $l_{0}<j\leqslant l_{2}$, we have
    \[q^{\sum \limits_{t=2}^{m}a_{it}\lambda_{t2}}-q^{a_{i1}\lambda_{21}}=1-q^{\sum \limits_{t=1}^{m}b_{jt}\lambda_{t1}}=\sum \limits_{t=1}^{m} a_{it}\lambda_{t1}= \sum \limits_{t=1}^{m} b_{jt}\lambda_{t1}= 0.\]

 From Case 1,  we get that for $1\leq i\leq l_0$,
    \begin{equation}\label{case1}
    \left \{
    \begin{array}{l}
      \sum \limits_{t=1}^{m} a_{it}\lambda_{t1}= 0 \\
      \sum \limits_{t=1}^{m} a_{it}\lambda_{t2}=\sum \limits_{t=1}^{m} b_{it}\lambda_{t2}+ \lambda_{12}-\lambda_{22}=\lambda_{12}.

    \end{array}
    \right .
  \end{equation}

  From Case 2, we have that for $l_0<i\leq l_1$,
  \begin{equation}\label{case2}
    \left \{
    \begin{array}{l}
      \sum \limits_{t=1}^{m} a_{it}\lambda_{t1}= 0 \\
      \sum \limits_{t=1}^{m} a_{it}\lambda_{t2}= 0.
    \end{array}
    \right .
  \end{equation}

      In the above discussion, replacing $X_{2}$ by other $X_{p}$ for $p\not=1,2$, we get similarly that:
    \begin{equation}\label{casep}
    \sum \limits_{t=1}^{m} a_{it}\lambda_{tp}= \lambda_{1p}^{i}
     \end{equation}
     where $\lambda_{1p}^{i}=\lambda_{1p}$ or 0 for any $3\leq p\leq m, 1\leq i\leq l_1$.

  In summary from (\ref{case1}), (\ref{case2}) and (\ref{casep}), we have
       $(a_{i1},a_{i2},\cdots,a_{im})\Lambda =(\lambda_{11}^{i},\lambda_{12}^{i},\cdots,\lambda_{1m}^{i})$
  for any $ 1\leq i\leq l_1$, i.e, in the expansion of $g(X_{1})$ any term $k_{i}X_{1}^{a_{i1}}X_{2}^{a_{i2}}\cdots X_{m}^{a_{im}}$ with $k_{i}\neq 0$ must have $(a_{i1},\cdots,a_{im})$ as a solution of above equations.

 When $(a_{11},a_{12},\cdots,a_{1m})=(1,0,\cdots,0)$, (\ref{solution}) is satisfied for $\lambda_{1p}^{1}=\lambda_{1p}$ for any $p$. So in the expansion of $g(X_1)$, we may consider the monomial $k_1X_1$ as the first term, i.e. $i=1$. Note that it maybe not exist if its coefficient $k_1$ is zero.

 Then we have the expansion of $g(X_1)$ as follows:
  \[g(X_{1})=k_{1}X_{1}+\sum \limits_{i=2}^{l_{1}}k_{i}X_{1}^{a_{i1}}X_{2}^{a_{i2}}\cdots X_{m}^{a_{im}},\]
  and
  \[(a_{i1},a_{i2},\ldots,a_{im})\Lambda =(\lambda_{11}^{i},\lambda_{12}^{i},\ldots,\lambda_{1m}^{i})\]
  where $\lambda_{1k}^{i}= 0$ or $\lambda_{1k}$. It implies this lemma holds for $h=1$.

The similar discussion for any $X_{h}, h\in [1,m]$ can be given to complete the proof. \end{Proof}

In the rest of this section we will always assume $A_{q}$ is a quantum cluster algebra without coefficients, i.e, $m=n$. Then (\ref{B^T Lambda=DO}) becomes
\[B^{\top}\Lambda=D\]
Following this, $B$ and $\Lambda$ are both of rank n and invertible. So $n>1$ since $B=0$ when $n=1$. And in this case $(a_{i1},a_{i2},\ldots,a_{im}) =(\lambda_{11}^{i},\lambda_{12}^{i},\ldots,\lambda_{1m}^{i})\Lambda^{-1}$.\begin{Lemma}\label{I1}
  Let $A_{q}$ be a quantum cluster algebra without coefficients. If $g \in \mathscr{P}(A_{q})$ satisfies that $g(X)=k_{X}X$ for any Laurent monomial $X$ in $A_{q}$ with $k_{X}\in \mathbb{Z}[q^{\pm\frac{1}{2}}]$, then there is a scalar $\mathbb{Z}[q^{\pm\frac{1}{2}}]$-transformation  $g^{\prime}\in \mathscr{P}(A_{q})$ such that $g^{\prime}\sim g$.
\end{Lemma}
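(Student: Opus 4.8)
The plan is to exploit the diagonal form of $g$ to pin down the coefficients $k_X$ using condition (\ref{YYZ1}), and then to manufacture a single scalar out of them. Write every Laurent monomial as $X^e=X_1^{e_1}\cdots X_m^{e_m}$ with $e\in\Z^m$, recall $X^eX^f=q^{\frac12 e\Lambda f^\top}X^{e+f}$, and abbreviate $k_e:=k_{X^e}$, so $g(X^e)=k_eX^e$. Since $\Lambda$ is skew-symmetric, a short computation gives $[X^e,X^f]=(q^{\frac12 e\Lambda f^\top}-q^{-\frac12 e\Lambda f^\top})X^{e+f}$, which is nonzero exactly when $e\Lambda f^\top\neq 0$. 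First I would feed $X^e,X^f$ into (\ref{YYZ1}): because $g$ merely rescales monomials, $[g(X^e),X^f]=k_e[X^e,X^f]$ while $[X^e,g(X^f)]=k_f[X^e,X^f]$, so (\ref{YYZ1}) forces $(k_e-k_f)[X^e,X^f]=0$. As $A_q$ is a domain, this yields $k_e=k_f$ whenever $e\Lambda f^\top\neq 0$, i.e. whenever $X^e$ and $X^f$ fail to commute.

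The heart of the argument is to show that this relation forces all $k_e$ with $e\neq 0$ to coincide. Since $A_q$ has no coefficients, $\Lambda$ is invertible and $m=n\geq 2$. I would regard the exponents as a graph on $\Z^m\setminus\{0\}$ with an edge $e$--$f$ precisely when $e\Lambda f^\top\neq 0$, and prove it has diameter at most $2$. Given nonzero $e,f$, the linear functionals $v\mapsto e\Lambda v^\top$ and $v\mapsto v\Lambda f^\top$ on $\Q^m$ are both nonzero by invertibility of $\Lambda$, so their kernels are two hyperplanes; since a $\Q$-vector space of dimension $\geq 2$ is not the union of two hyperplanes, and denominators may be cleared, there is $w\in\Z^m$ with $e\Lambda w^\top\neq 0$ and $w\Lambda f^\top\neq 0$. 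Thus $e$--$w$--$f$ is a path, and the previous paragraph gives $k_e=k_w=k_f$. Consequently there is a single $\lambda\in\Z[q^{\pm\frac12}]$ with $k_e=\lambda$ for all $e\neq 0$, while the value $k_0$ of $g$ on $X^0=1$ stays unconstrained.

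Finally I would take $g'$ to be the scalar transformation $g'(a)=\lambda a$ on $A_q$, which is exactly the standard structure $\lambda[-,-]$. That $g'\in\mathscr{P}(A_q)$ is immediate: (\ref{YYZ1}) follows from $[\lambda x,y]=\lambda[x,y]=[x,\lambda y]$, and $[g'(x),g'(y)]-g'([g'(x),y])=\lambda^2[x,y]-\lambda^2[x,y]=0\in Z(A_q)$ gives (\ref{YYZ2}). To see $g'\sim g$ I would use $\tau=\mathrm{id}$: for $a=\sum_e c_eX^e$ one has $(g-g')(a)=\sum_e c_e(k_e-\lambda)X^e=c_0(k_0-\lambda)\cdot 1$, which lies in $Z(A_q)$ because $1$ is central for $[-,-]$. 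Hence $\mathrm{Im}(g-g')\subseteq Z(A_q)$, so $g\sim g'$ as required.

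I expect the connectivity step of the second paragraph to be the main obstacle, and it is precisely where the hypothesis ``without coefficients'' (invertibility of $\Lambda$ together with $n\geq 2$) is essential: for a degenerate $\Lambda$ the non-commutation graph could disconnect and distinct values $k_e$ could survive, breaking the reduction to a single scalar. Everything else is a direct verification.
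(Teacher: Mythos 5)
Your proof is correct, and its skeleton is the same as the paper's: condition (\ref{YYZ1}) applied to a non-commuting pair of monomials forces $k_X=k_Y$; a connectivity argument then collapses all the $k_X$, for non-constant $X$, to a single scalar $\lambda$; and finally $g'=\lambda\cdot\mathrm{id}$ lies in $\mathscr{P}(A_q)$ with $\mathrm{Im}(g-g')\subseteq Z(A_q)$, so $g\sim g'$ via $\tau=\mathrm{id}$. The one genuine divergence is the connectivity step, which you rightly identify as the heart of the matter. The paper proceeds by cases: for commuting non-central $X,Y$ it chooses monomials $M,N$ with $[X,M]\neq 0\neq [Y,N]$, and when neither $M$ nor $N$ links the two it uses the product $MN$ as the intermediate vertex, exploiting that a monomial commuting with $N$ but not with $M$ cannot commute with $MN$ (add exponent vectors and pair with $\Lambda$). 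You instead produce the intermediate vertex in one stroke: for nonzero $e,f$ the kernels of the nonzero functionals $v\mapsto e\Lambda v^{\top}$ and $v\mapsto v\Lambda f^{\top}$ are two proper subspaces of $\Q^{n}$, which cannot cover it, so the non-commutation graph has diameter at most $2$. Both routes rest on the same consequences of the no-coefficients hypothesis — invertibility of $\Lambda$ (equivalently, no non-constant monomial is central, which is how the paper phrases it via $Z(A_q)=\Z[q^{\pm\frac{1}{2}}]$) and $n\geqslant 2$ — but yours trades the paper's hands-on product trick for a cleaner covering argument that avoids the case analysis. A last small difference: the paper does not read $g(1)=k_0\cdot 1$ off the hypothesis, but instead derives that $g$ sends constants into $Z(A_q)$ from $[g(a),W]=[a,g(W)]=0$; your reading of $X^{0}=1$ as a Laurent monomial is equally legitimate, and in either case the constant part of $g-g'$ lands in the center, as required.
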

\begin{Proof}{}

    For any  Laurent monomial $X=pX_{1}^{m_{1}}X_{2}^{m_{2}}\cdots X_{n}^{m_{n}} \in A_{q}$, $X$ communicates with $X_{i}$ if and only if $\sum \limits_{j} m_{j}\lambda_{ji}$ = 0. Therefore $X\in Z(A_q)$ the center of $A_q$ if and only if $(m_{1},m_{2},\cdots,m_{n})\Lambda$ = 0. Because $\Lambda$ is invertible, we have $Z(A_{q})= \mathbb{Z}[q^{\pm \frac{1}{2}}]$.

    Therefore for any non-constant Laurent monomial $X\in A_{q}$, we can find a Laurent monomial $Y_0\in A_{q}$ such that $\left [X,Y_0\right ]\neq$ 0.

    For two non-constant Laurent monomials $X,Y\in A_{q}$, we claim $k_X=k_Y$.

    Case 1: Assume $XY\not=YX$.

   Denote the Poisson bracket associated to $g$ as $\left\{-,-\right\}$. Then, first,  we have $$\left\{X,Y\right\}=[g(X),Y]=k_{X}XY-k_{X}YX.$$
    On the other hand, according to (\ref{YYZ1}) we have also $$\left\{X,Y\right\}=[X,g(Y)]=k_{Y}XY-k_{Y}YX.$$ Thus since  $XY\neq YX$, we obtain $k_{X}=k_{Y}$.

    Case 2: Assume $XY=YX$.

    Since $X,Y\notin Z(A_{q})$, there are Laurent monomials $M,N$ in $A_{q}$ such that $XM\neq MX, YN\neq NY$.
   Then from Case 1, we have $k_X=k_M$, $k_Y=k_N$.

    If either $YM\neq MY$ or $XN\neq NX$, then $k_{Y}=k_{M}$ or $k_{X}=k_{N}$. It follows that $k_X=k_Y$.

    Otherwise, $YM=MY$ and $XN=NX$. It is easy to see that  $X(MN)\neq (MN)X, Y(MN)\neq (MN)Y$. So, from Case 1,  $k_{X}=k_{MN}=k_{Y}$.

Then, there exists a fixed element $k_0\in\mathbb{Z}[q^{\pm\frac{1}{2}}]$ such that $k_0=k_X$   for any non-constant Laurent monomial $X \in A_{q}$. It follows that for any such  $X$,
\begin{equation}\label{scalar1}
 g(X)=k_0X.
 \end{equation}

  For any constant $a\in \mathbb{Z}[q^{\pm\frac{1}{2}}]$ and any $W\in A_{q}$, we have
  \[[g(a),W]=\{a,W\}=[a,g(W)]=0.\]
  Therefore $g(a)\in Z(A_{q})= \mathbb{Z}[q^{\pm \frac{1}{2}}]$, that is, $g(a)$ is a constant.

Let $g'$ be the $k_0$-scalar linear transformation of $A_q$, that is, for any $W\in A_q$, define
$g'(W)=k_0W$.
Trivially, $g^{\prime}\in \mathscr{P}(A_{q})$.

  By (\ref{scalar1}) and since $g(a)$ is a constant for any $a\in \mathbb{Z}[q^{\pm\frac{1}{2}}]$, we have  Im$(g-g^{\prime})\subseteq Z(A_q)= \mathbb{Z}[q^{\pm \frac{1}{2}}]$. It means that $g\sim g^{\prime}$.

\end{Proof}

\begin{Lemma}\label{I2}
  Let $A_{q}$ be a quantum cluster algebra without coefficients. Then for any $g \in \mathscr{P}(A_{q})$,

   (i)\; for any Laurent monomial $X$ in $A_q$, $g(X)=k_{X}X+k_{X}^{\prime}$, where $k_{X},k_{X}^{\prime}\in \mathbb{Z}[q^{\pm \frac{1}{2}}]$;

   (ii)\;  there is a scalar $\mathbb{Z}[q^{\pm\frac{1}{2}}]$-transformation  $g_{0}\in \mathscr{P}(A_{q})$ such that $g_{0}\sim g$.
\end{Lemma}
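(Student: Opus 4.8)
The plan is to establish part (i) first and then read off part (ii) directly from Lemma \ref{I1}. Since $g$ is $\mathbb{Z}[q^{\pm\frac12}]$-linear it suffices, throughout, to treat Laurent monomials. For (i) I would begin by observing that the computation proving Lemma \ref{inner} never used that $X_h$ is a cluster variable: it used only the antisymmetry $\{X,X\}=[g(X),X]=0$ and the symmetry relation (\ref{YYZ1}). So I would rerun that argument with an arbitrary Laurent monomial $X^{e}$ in place of $X_h$, probing it against each cluster variable $X_p$ through $[g(X^{e}),X_p]=[X^{e},g(X_p)]$. Because $m=n$ forces $\Lambda$ to be invertible, every surviving exponent $a_i$ in $g(X^{e})=\sum_i k_i X^{a_i}$ is uniquely recovered from the row vector $a_i\Lambda$, and the analogue of (\ref{solution}) reads $(a_i\Lambda)_p\in\{0,(e\Lambda)_p\}$ for each $p$. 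The ``keep every $p$'' choice reproduces the diagonal term $X^{e}$ and the ``keep no $p$'' choice reproduces a constant, so
\[ g(X^{e})=k_{e}X^{e}+k'_{e}+\sum_{i}k_{i}X^{a_{i}}, \]
where each remaining $a_i$ corresponds to a proper, nonempty set $S_i\subsetneq\{p:(e\Lambda)_p\neq0\}$ of retained coordinates.

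The whole force of (i) is then to show that every such intermediate coefficient $k_i$ is $0$. For this I would invoke the one hypothesis not yet used, condition (\ref{YYZ2}) (equivalently, the Jacobi identity for $\{-,-\}$), together with the first-slot Leibniz identity
\[ [g(X^{e+f}),Y]=q^{-\frac12 e\Lambda f^{\top}}\big([g(X^{e}),Y]X^{f}+X^{e}[g(X^{f}),Y]\big),\qquad\forall\,Y, \]
which follows from antisymmetry and the Leibniz rule and relates $g$ on a product to $g$ on its factors modulo the center $Z(A_q)=\mathbb{Z}[q^{\pm\frac12}]$. The crucial feature is that distinct exponent vectors give $\mathbb{Z}[q^{\pm\frac12}]$-linearly independent monomials, once more by invertibility of $\Lambda$; hence matching monomials on the two sides of (\ref{YYZ2}) and of the Leibniz identity converts the partial-row description into a system of scalar equations in the $k_i$, which I would show is solvable only when all intermediate coefficients vanish. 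I expect this elimination of the proper-partial exponents to be the main obstacle: the exponent bookkeeping that propagates a putative intermediate term through the finitely many $g(X_1),\dots,g(X_n)$ is delicate, and it is exactly here that the coefficient-free hypothesis is indispensable, since invertibility of $\Lambda$ is what both makes the exponents recoverable from $a_i\Lambda$ and prevents the partial-row vectors from persisting. Once this is done, $g(X^{e})=k_{e}X^{e}+k'_{e}$, proving (i).

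Granting (i), part (ii) is short. Define $\tilde g$ by deleting the constant part of $g$ on each monomial, i.e. $\tilde g(X)=k_XX$; by linearity $\mathrm{Im}(g-\tilde g)\subseteq\mathbb{Z}[q^{\pm\frac12}]=Z(A_q)$. Since central elements have vanishing commutator, $[\tilde g(x),y]=[g(x),y]$ and $[\tilde g(x),\tilde g(y)]=[g(x),g(y)]$ for all $x,y$, so that
\[ [\tilde g(x),\tilde g(y)]-\tilde g([\tilde g(x),y])=\big([g(x),g(y)]-g([g(x),y])\big)+z,\qquad z\in Z(A_q), \]
and hence $\tilde g$ inherits (\ref{YYZ1}) and (\ref{YYZ2}) from $g$, giving $\tilde g\in\mathscr{P}(A_q)$; taking $\tau=\mathrm{id}$ in the definition of $\sim$ and using $\mathrm{Im}(g-\tilde g)\subseteq Z(A_q)$ yields $\tilde g\sim g$. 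Finally $\tilde g$ now satisfies the hypothesis of Lemma \ref{I1}, which produces a scalar $\mathbb{Z}[q^{\pm\frac12}]$-transformation $g_{0}\in\mathscr{P}(A_q)$ with $g_{0}\sim\tilde g$; by transitivity of $\sim$ we conclude $g_{0}\sim g$, which is exactly (ii).
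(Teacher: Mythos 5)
Your reduction is set up correctly: generalizing Lemma \ref{inner} to an arbitrary Laurent monomial $X^{e}$ via the probes $[g(X^{e}),X_p]=[X^{e},g(X_p)]$ does give $(a_i\Lambda)_p\in\{0,(e\Lambda)_p\}$ for every surviving exponent $a_i$, and your part (ii) (strip the constants, check $\tilde g\in\mathscr{P}(A_q)$, invoke Lemma \ref{I1} and transitivity of $\sim$) is exactly the paper's argument. But part (i) has a genuine gap: the entire content of the lemma is the elimination of the ``proper-partial'' exponents, i.e.\ those $a_i$ whose row $a_i\Lambda$ agrees with $e\Lambda$ on a nonempty proper subset of coordinates and vanishes on the rest, and at precisely this point your proposal stops proving and starts promising (``which I would show is solvable only when all intermediate coefficients vanish'', ``I expect this elimination \dots to be the main obstacle''). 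Identifying where the difficulty sits is not the same as resolving it; as written, the lemma is not proved. Moreover, the tool you nominate for the job, condition (\ref{YYZ2}) together with the first-slot Leibniz identity, is not what makes the elimination work in the paper --- the paper's proof of part (i) uses only (\ref{YYZ1}) and antisymmetry --- and you give no computation suggesting that matching monomials in (\ref{YYZ2}) actually forces the intermediate $k_i$ to vanish, so it is not even clear your route can be completed.

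What the paper does instead is a genericity-plus-linear-algebra argument that your sketch has no counterpart for. One probes $g(X^{m^o})$ against every cluster variable $X_h$ for exponent vectors $m^o\in\Z^n_{\geqslant 0}$ avoiding the finitely many conditions $\sum_t m^o_t\lambda_{th}\in\{0,\lambda_{ih}\}$ (condition (\ref{m})). For such generic $m^o$ a putative intermediate matching $c_j=m^o+a^h_i-e_h$ would force $(c_j\Lambda)_{h'}\neq 0$ for all $h'$, hence intermediate matchings at \emph{every} probe, and the resulting system (\ref{not case 2}) yields $a^{h_1}_j\Lambda=e_{h_1}\Lambda$, so $a^{h_1}_j=e_{h_1}$ and $c_j=m^o$, a contradiction; thus all intermediate terms of $g(X_h)$ fall into the unmatched case and satisfy $\sum_s\lambda^i_{hs}m^o_s=0$. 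Since $\Q^n$ cannot be covered by finitely many proper subspaces, one can choose $n$ linearly independent generic integer vectors, forcing the partial rows $(\lambda^i_{h1},\cdots,\lambda^i_{hn})$ to vanish identically, whence $g(X_h)=k^h_1X_h+k^h_2$; only then does the case of a general monomial collapse, because the probe expansions now contain a single non-constant term, giving $c_j\Lambda=0$ and hence $c_j=0$ for $j\geqslant 2$. To repair your proof you would either need to carry out this genericity argument (or an equivalent one) or actually demonstrate that your Jacobi/Leibniz system kills the intermediate coefficients, neither of which is present.
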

\begin{Proof} {\em (i)}\;
  According to Lemma \ref{inner},
  \begin{equation}\label{equalfromg}
  g(X_{h})=k_{1}^{h}X_{h}+\sum \limits_{i=2}^{l_{h}}k_{i}^{h}X_{1}^{a_{i1}^{h}}X_{2}^{a_{i2}^{h}}\cdots X_{n}^{a_{in}^{h}},
  \end{equation}
  where
  \begin{equation*}
    (a_{i1}^{h},a_{i2}^{h},\ldots,a_{in}^{h})\Lambda =(\lambda_{h1}^{i},\lambda_{h2}^{i},\ldots,\lambda_{hn}^{i})
  \end{equation*}
  and $\lambda_{hp}^{i}= 0$ or $\lambda_{hp}$ for $1\leq p\leq n$.
  For $m_{1},\cdots,m_{n}\in\mathbb{Z}$, assume
  \begin{equation}\label{expen}
  g(X_{1}^{m_{1}}X_{2}^{m_{2}}\cdots X_{n}^{m_{n}})=\sum \limits_{j=1}^{l}f_{j}X_{1}^{c_{j1}}X_{2}^{c_{j2}}\cdots X_{n}^{c_{jn}},
   \end{equation}
   satisfying $c_{1t}=m_{t}$ for $t\in[1,n]$,  as  a $\mathbb{Z}[q^{\pm \frac{1}{2}}]$-linearly independent expansion except that  $f_{1}$ may be zero. Let $\left\{-,-\right\}$ be the Poisson structure correspond to $g$.

  According to (\ref{YYZ1}), we have:
  \begin{equation*}
    \begin{array}{rl}
      \left \{X_{1},X_{1}^{m_{1}}X_{2}^{m_{2}}\cdots X_{n}^{m_{n}}\right \}= & [g(X_{1}),X_{1}^{m_{1}}X_{2}^{m_{2}}\cdots X_{n}^{m_{n}}] \\
      = & k_{1}^{1}X_{1}^{m_{1}+1}X_{2}^{m_{2}}\cdots X_{n}^{m_{n}}- k_{1}^{1}X_{1}^{m_{1}}X_{2}^{m_{2}}\cdots X_{n}^{m_{n}}X_{1} \\
        & +\sum \limits_{i=2}^{l_{1}}k_{i}^{1}X_{1}^{a_{i1^{1}}}\cdots X_{n}^{a_{in}^{1}}X_{1}^{m_{1}}\cdots X_{n}^{m_{n}}- \sum\limits_{i=2}^{l_{1}}k_{i}^{1}X_{1}^{m_{1}}\cdots X_{n}^{m_{n}}X_{1}^{a_{i1}^{1}}\cdots X_{n}^{a_{in}^{1}} \\
      = & k_{1}^{1}(1-q^{\sum \limits_{t=1}^{n}m_{t}\lambda_{t1}})X_{1}^{m_{1}+1}X_{2}^{m_{2}}\cdots X_{n}^{m_{n}} \\
        & + \sum \limits_{i=2}^{l_{1}}k_{i}^{1}(q^{\sum \limits_{r>s}a_{ir}m_{s}\lambda_{rs}}-q^{\sum \limits_{r<s}a_{ir}m_{s}\lambda_{sr}})X_{1}^{m_{1}+a_{i1}^{1}}\cdots X_{n}^{m_{n}+a_{in}^{1}} \\
      = & k_{1}^{1}(1-q^{\sum \limits_{t=1}^{n}m_{t}\lambda_{t1}})X_{1}^{m_{1}+1}X_{2}^{m_{2}}\cdots X_{n}^{m_{n}} \\
        & + \sum \limits_{i=2}^{l_{1}}k_{i}^{1}q^{\sum \limits_{r>s}a_{ir}^{1}m_{s}\lambda_{rs}}(1-q^{\sum \limits_{r,s=1}^{n}a_{ir}^{1}m_{s}\lambda_{sr}})X_{1}^{m_{1}+a_{i1}^{1}}\cdots X_{n}^{m_{n}+a_{in}^{1}};
    \end{array}
  \end{equation*}
  on the other hand,
  \begin{equation*}
    \begin{array}{rl}
      \left \{X_{1},X_{1}^{m_{1}}X_{2}^{m_{2}}\cdots X_{n}^{m_{n}}\right \}= & [X_{1},g(X_{1}^{m_{1}}X_{2}^{m_{2}}\cdots X_{n}^{m_{n}})] \\
      = & \sum \limits_{j=1}^{l}f_{j}(X_{1}^{c_{j1}+1}X_{2}^{c_{j2}}\cdots X_{n}^{c_{jn}}- X_{1}^{c_{j1}}X_{2}^{c_{j2}}\cdots X_{n}^{c_{jn}X_{1}}) \\
      = & \sum \limits_{j=1}^{l}f_{j}(1-q^{\sum \limits_{t=1}^{n}c_{jt}\lambda_{t1}})X_{1}^{c_{j1}+1}X_{2}^{c_{j2}}\cdots X_{n}^{c_{jn}}
    \end{array}
  \end{equation*}
  Note that in the last step of the first expansion of $\{X_{1},X_{1}^{m_{1}}X_{2}^{m_{2}}\cdots X_{n}^{m_{n}} \}$, we have
  \begin{equation}\label{oneequality}
  \sum \limits_{r,s=1}^{n}a_{ir}^{1}m_{s}\lambda_{sr}= -\sum \limits_{s=1}^{n}(\sum \limits_{r=1}^{n}a_{ir}^{1}\lambda_{rs})m_{s}= -(a_{i1}^{1}\cdots a_{in}^{1})\Lambda (m_{1}\cdots m_{n})^{\top}=-(\lambda_{11}^{i}\cdots \lambda_{1n}^{i})(m_{1}\cdots m_{n})^{\top}.
  \end{equation}

  The last steps of the two kinds of expansions of $\{X_{1},X_{1}^{m_{1}}X_{2}^{m_{2}}\cdots X_{n}^{m_{n}} \}$ are both in $\mathbb{Z}[q^{\pm \frac{1}{2}}]$-linearly independent forms, which are the same due to the algebraic independence of $\{X_1, X_2, \cdots, X_m\}$. Hence, for some $l_0\leq l_1,l$,  there are $l_{0}-1$ monomials with non-zero-coefficients in the last steps of two kinds of expansions above  respectively and the coefficients of other monomials are all zeros, besides the first terms in these two expansion which maybe be zero or non-zero in the various cases.

  Without loss of generality, suppose the $l_{0}-1$ monomials with non-zero-coefficients are just those ones whose indexes are with $2\leqslant i\leqslant l_{0}$ and $2\leqslant j\leqslant l_{0}$ respectively in the last steps of two kinds of expansions above, that is, we assume they are in one-by-one correspondence indexed by $i=2,\cdots,l_0$.

 Thus, due to the above discussion, from comparation of coefficients,  we obtain that
   \begin{equation}\label{i=1}
  A_1=k_{1}^{1}(1-q^{\sum \limits_{t=1}^{n}m_{t}\lambda_{t1}})X_{1}^{m_{1}+1}X_{2}^{m_{2}}\cdots X_{n}^{m_{n}}= f_{1}(1-q^{\sum \limits_{t=1}^{n}c_{1t}\lambda_{t1}})X_{1}^{c_{11}+1}X_{2}^{c_{12}}\cdots X_{n}^{c_{1n}}.
   \end{equation}
    When $2\leqslant i\leqslant l_{0}$,
     \begin{equation}\label{i=2}
     0\neq k_{i}^{1}q^{\sum \limits_{r>s}a_{ir}^{1}m_{s}\lambda_{rs}}(1-q^{\sum \limits_{r,s=1}^{n}a_{ir}^{1}m_{s}\lambda_{sr}})X_{1}^{m_{1}+a_{i1}^{1}}\cdots X_{n}^{m_{n}+a_{in}^{1}}= f_{i}(1-q^{\sum \limits_{t=1}^{n}c_{it}\lambda_{t1}})X_{1}^{c_{i1}+1}X_{2}^{c_{i2}}\cdots X_{n}^{c_{in}}.
     \end{equation}
  When $l_{0}<i\leqslant l_{h}, l_{0}<j\leqslant l$,
   \begin{equation}\label{i>l_0}
   0=k_{i}^{1}q^{\sum \limits_{r>s}a_{ir}^{1}m_{s}\lambda_{rs}}(1-q^{\sum \limits_{r,s=1}^{n}a_{ir}^{1}m_{s}\lambda_{sr}})X_{1}^{m_{1}+a_{i1}^{1}}\cdots X_{n}^{m_{n}+a_{in}^{1}}= f_{j}(1-q^{\sum \limits_{t=1}^{n}c_{jt}\lambda_{t1}})X_{1}^{c_{j1}+1}X_{2}^{c_{j2}}\cdots X_{n}^{c_{jn}}.
\end{equation}

  In (\ref{i=1}), we have that $A_1=0$ if and only if $\sum \limits_{t=1}^{n}m_{t}\lambda_{t1}= 0$; otherwise, $A_1\not=0$ then $f_{1}=k_{1}^{1}$.

  From (\ref{i=2}) and (\ref{oneequality}), we obtain that for $2\leqslant i\leqslant l_{0}$,
  \begin{equation*}
    \left \{
    \begin{array}{l}
      c_{i1}=m_{1}+a_{i1}^{1}-1\\
      c_{ip}=m_{p}+a_{ip}^{1},\; \text{for}\; 2\leqslant p \leqslant n\\
      0\neq \sum \limits_{r,s=1}^{n} a_{ir}m_{s}\lambda_{sr}= -\sum \limits_{s=1}^{n}\lambda_{1s}^{i}m_{s} \\
      0\neq \sum \limits_{t=1}^{n} c_{it}\lambda_{t1}= \sum \limits_{t=1}^{n}(m_{t}+a_{it}^{1})\lambda_{t1}= \sum \limits_{t=1}^{n}m_{t}\lambda_{t1}+\lambda_{11}^{i}= \sum \limits_{t=1}^{n}m_{t}\lambda_{t1}.
    \end{array}
    \right .
  \end{equation*}

  From (\ref{i>l_0}) and (\ref{oneequality}), we obtain that for $i,j>l_{0}$,
  \begin{equation}\label{added}
  \sum \limits_{s=1}^{n}\lambda_{1s}^{i}m_{s}=\sum \limits_{p=1}^{n}c_{jp}\lambda_{p1}=0.
  \end{equation}

  In conclusion, $(c_{j1},\cdots,c_{jn})$ with $f_{j}\neq 0$ must satisfy one of (\ref{i=1}), (\ref{i=2}) and (\ref{i>l_0}) for any $j=1,2,\cdots,l_0$.

  In the same way,  replacing $X_{1}$ by $X_{h}$, $h\in[1,n]$, we will also obtain three equalities similar to (\ref{i=1}), (\ref{i=2}) and (\ref{i>l_0}) such that $(c_{j1},\cdots,c_{jn})$ with $f_{j}\neq 0$ satisfies one of three equalities.

  According to our assumption, we always have $(c_{11},\cdots,c_{1n})=(m_{1},\cdots,m_{n})$.

  Now we want to prove by contradiction that  $(c_{j1},\cdots,c_{jn})$ can only be $(0,\cdots,0)$  for $2\leqslant j\leqslant l$.  Hence, we first assume that $(c_{j1},\cdots,c_{jn})\neq (0,\cdots,0)$ in this case.

  We can choose some special $m^o_{1},\cdots,m^o_{n}\in\mathbb{Z}_{\geqslant 0}^{n}$ such that
  \begin{equation}\label{m}
    \left \{
    \begin{array}{l}
      \sum \limits_{t=1}^{n}m^o_{t}\lambda_{th}\neq \lambda_{ih} \text{ for any }i,h\\
      \sum \limits_{t=1}^{n}m^o_{t}\lambda_{th}\neq 0 \text{ for any }i,h\\
      m^o_{t}\geqslant 0\text{ for any }t
    \end{array}
    \right .
  \end{equation}

  For any $X_{h}$, $h\in[1,n]$, we first claim that under the condition (\ref{m}), $(c_{j1},\cdots,c_{jn})$ with $f_{j}\neq 0$ does not satisfy the equality similar to (\ref{i=2}).

  In fact, because $\Lambda$ is invertible, so since $(c_{j1},\cdots,c_{jn})\neq (0,\cdots,0)$, we have  $(c_{j1},\cdots,c_{jn})\Lambda \neq (0,\cdots,0)$. Therefore $(c_{j1},\cdots,c_{jn})$ can not satisfy an equality similar to (\ref{i>l_0}) for all $h\in[1,n]$, i.e. it must satisfy some equations similar to (\ref{i=1}) or (\ref{i=2}) for some $h$. Therefore all of the possible $(c_{j1},\cdots,c_{jn})\not=(0,\cdots,0)$ are  $(m^o_{1}+a_{i1}^{h},\cdots,m^o_{h}+a_{ih}^{h}-1,\cdots,m^o_{n}+a_{in}^{h})$ for some $i$ and $h$. Hence  for any $h$, by (\ref{m}),
  $$\sum \limits_{t=1}^{n}c_{jt}\lambda_{th}=\sum \limits_{t=1}^{n}(m^o_{t}+a_{it}^{p})\lambda_{th}-\lambda_{ph}= \sum \limits_{t=1}^{n}m^o_{t}\lambda_{th}+\lambda_{ph}^{i}-\lambda_{ph}= \left\{\begin{array}{cc}
                                                                      \sum \limits_{t=1}^{n}m^o_{t}\lambda_{th}\neq 0, & \text{if}\;\;\lambda_{ph}^{i}=\lambda_{ph} \\
                                                                      \sum \limits_{t=1}^{n}m^o_{t}\lambda_{th}-\lambda_{ph}\neq 0, & \text{if}\;\;\lambda_{ph}^{i}=0.
                                                                    \end{array}\right.$$
  So for any $h$, $(0,\cdots,0)$ is the only $(c_{j1},\cdots,c_{jn})$ satisfying the equalities  similar to (\ref{i>l_0}), $(m^o_{1},\cdots,m^o_{n})$ is the only $(c_{11},\cdots,c_{1n})$ satisfying the equalities similar to (\ref{i=1}), while all of that $$(m^o_{1}+a_{i1}^{p},\cdots,m^o_{p}+a_{ip}^{p}-1,\cdots,m^o_{n}+a_{in}^{p})$$ satisfy the equalities similar to (\ref{i=2}).

  Hence for any $(c_{j1},\cdots,c_{jn})\not=(0,\cdots,0)$ with $2\leqslant j\leqslant l_0$, we have
  \begin{alignat}{2}\label{not case 2}
       c_{j1}&=m^o_{1}+a_{j1}^{1}-1&=\cdots&=m^o_{1}+a_{j1}^{n}\nonumber\\
       \vdots\quad&\qquad \qquad\vdots&\quad \ddots&\qquad \qquad \vdots\\
       c_{jn}&=m^o_{n}+a_{jn}^{1}&=\cdots&=m^o_{1}+a_{jn}^{n}-1\nonumber.
  \end{alignat}
  Then for any $h_{1}\neq h_{2}$,
  \begin{equation*}
    \left \{
    \begin{array}{l}
       a_{jh_{1}}^{h_{1}}-1=a_{jh_{1}}^{h_{2}} \\
       a_{jh_{2}}^{h_{1}}=a_{jh_{2}}^{h_{2}}-1\\
       a_{jh}^{h_{1}}=a_{jh}^{h_{2}},\quad \text{for any } h\neq h_{1},h_{2}
    \end{array}
    \right .
  \end{equation*}
  Therefore,
  \[\sum \limits_{h=1}^{n}a_{jh}^{h_{1}}\lambda_{hh_{1}}=\lambda_{h_{1}h_{1}}^{j}=0=\lambda_{h_{1}h_{1}}, \;\; \sum\limits_{h=1}^{n}a_{jh}^{h_{1}}\lambda_{hh_{2}}=\sum \limits_{h=1}^{n}a_{jh}^{h_{2}}\lambda_{hh_{2}}+\lambda_{h_{1}h_{2}}=\lambda_{h_{2}h_{2}}^{j}+\lambda_{h_{1}h_{2}}=\lambda_{h_{1}h_{2}},\]
  for any $h_{1},h_{2}$. Hence $ (a_{j1}^{h_{1}},\cdots,a_{jn}^{h_{1}})\Lambda=(\lambda_{h_{1}1},\cdots,\lambda_{h_{1}n})$. Again because $\Lambda$ is invertible, we have $(a_{j1}^{h_{1}},\cdots,a_{jn}^{h_{1}})=e_{h_{1}}$. Therefore by (\ref{not case 2}) we get \[(c_{j1},\cdots,c_{jn})=(m^o_{1},\cdots,m^o_{n})+(a_{j1}^{h_{1}},\cdots,a_{jn}^{h_{1}})-e_{h_{1}}=(m^o_{1},\cdots,m^o_{n}),\]
  which contradicts to our assumption as $j\geqslant 2$. Thus in conclusion, under the condition (\ref{m}), $(c_{j1},\cdots,c_{jn})$ with $f_{j}\neq 0$ does not satisfy the equality similar to (\ref{i=2}) for any $h$.

  Hence, indeed, the case satisfying the equality similar to (\ref{i=2}) would not happen under the assumption of (\ref{m}). It means for any $h$, we only have (\ref{i>l_0}) to hold for $2\leqslant i\leqslant l_{h}$. Therefore, $\sum \limits_{s=1}^{n}\lambda_{hs}^{i}m^o_{s}=0$ for any $h$ and $2\leqslant i\leqslant l_{h}$ according to (\ref{added}).
  Define sets£º

  $S_{ih}=\{(m_{1},\cdots,m_{n})\in \mathbb Z^n_{\geqslant 0}\mid\; (m_{1},\cdots,m_{n})\Lambda=(t_{1},\cdots,t_{h-1},\lambda_{ih},t_{h+1},\cdots,t_{n}) \;\forall t_{1},\cdots,t_{h-1},t_{h+1},\\\cdots,t_{n}\in\mathbb{Z} \}$ for any $i,h\in [1,n]$;

  $T_h=\{(m_{1},\cdots,m_{n})\in \mathbb Z^n_{\geqslant 0}\mid\; (m_{1},\cdots,m_{n})\Lambda=(t_{1},\cdots,t_{h-1},0,$ $t_{h+1},\cdots,t_{n})\;\forall t_{1},\cdots,t_{h-1},t_{h+1},\cdots,\\t_{n}\in \mathbb Z\}$ for any $h\in [1,n]$.

  Then the set of positive integer vectors $(m_{1},\cdots,m_{n})$ satisfying (\ref{m}) is equal to the set $$\Z_{\geqslant 0}^{n}\backslash \bigcup_{i,h\in [1,n]}(S_{ih}\cup T_i).$$

  For any $i,h$, the sets $S_{ih}, T_i$ lie discretely in their corresponding $(n-1)$-dimensional nonnegative cones $C_{ih}, D_i$ in  $\Q^{n}$ respectively. All of $C_{ih}, D_i$ are contained  in the $n$-dimensional nonnegative cone (or say, the first quadrant) $\Q_{\geqslant 0}^{n}$ of  $\Q^{n}$. Let $C'= \bigcup\limits_{i,h\in [1,n]}(C_{ih}\cup D_i)$.

  It is easy to see that every $l$-dimensional nonnegative cone included in  $\Q_{\geqslant 0}^{n}$  can be seen uniquely as an intersection of an  $l$-dimensional linear subspace and  $\Q_{\geqslant 0}^{n}$  for any $l\leqslant n$. Denote by $P_{ih}$ the $(n-1)$-dimensional linear subspace such that $C_{ih}=P_{ih}\bigcap \Q_{\geqslant 0}^{n} $ and by $Q_{i}$ the $(n-1)$-dimensional linear subspace such that $D_{i}=Q_{i}\bigcap \Q_{\geqslant 0}^{n} $ for $i,h\in[1,n]$.

  Let $P^{\prime}=\bigcup\limits_{i,h\in[1,n]}P_{ih}\cup Q_i$. Then, $C'\subseteq P'$.

  Assume there is at most $p(< n)$ linearly independent vectors in $\Q_{\geqslant 0}^{n}\setminus C^{\prime}$. Let $P_{0}$ be the subspace spanned by these $p$ linearly independent vectors. Then $\Q_{\geqslant 0}^{n}\subseteq P_{0}\bigcup C'\subseteq P_0\bigcup P^{\prime}$. But the standard basis $\{e_1,\cdots,e_n\}\subseteq \Q_{\geqslant 0}^{n}$. It follows that $\Q^{n}\subseteq P_0\bigcup P^{\prime}$, which contradicts to the well-known fact that every finite $n$-dimensional linear space can  not be contained in a union of finitely many subspaces with dimensions less than $n$.

  Hence, we can find $n$ linearly independent vectors in $\Q_{\geqslant 0}^{n}\setminus C^{\prime}$, say $v_{1},\cdots,v_{n}\in \Q_{\geqslant 0}^{n}$, whose coordinates satisfy respectively the condition (\ref{m}).

  Now, we can find an $a\in\Z_{+}$ such that $av_{i}\in\Z^n_{\geqslant 0}$. Without loss of generality, we may think for each $av_i=(m^o_{1i},\cdots,m^o_{ni})$ ($i=1,\cdots,n$), the condition (\ref{m}) still is satisfied. Otherwise, the only possibility is that the first condition in (\ref{m}) is not satisfied, then we can always replace $a$ by $ra$ for certain $r\in\Z_{+}$ such that the first condition in (\ref{m}) is satisfied, too.

  In summary, we can obtain  $\Q$-linearly independent vectors $av_i=(m^o_{1i},\cdots,m^o_{ni})\in\Z^n_{\geqslant 0}$ ($i=1,\cdots,n$) satisfying (\ref{m}).

   And as we discussed above, the following equation is satisfied:\\
  \begin{equation*}
    \begin{pmatrix}
       \lambda_{h1}^{i} & \cdots & \lambda_{hn}^{i}
    \end{pmatrix}
    \begin{pmatrix}
       m^o_{11} & \cdots & m^o_{1n} \\
       \vdots & \ddots & \vdots \\
       m^o_{n1} & \cdots & m^o_{nn}
    \end{pmatrix}
     =0
  \end{equation*}
  So $(\lambda_{h1}^{i},\cdots,\lambda_{hn}^{i})$ can only be $(0,\cdots,0)$ for any $h$, $2\leqslant i\leqslant l_{1}$. Then it follows from Lemma \ref{inner} that for any $h$ and   $2\leqslant i\leqslant l_{1}$,
  \begin{equation}\label{key}
   (a_{i1}^{h},\cdots,a_{in}^{h})=(0,\cdots,0).
  \end{equation}
  Then, we have $g(X_{h})=k_{1}^{h}X_{h}+k_{2}^{h}$ where $k_{2}^{h}\in \mathbb{Z}[q^{\pm \frac{1}{2}}]$.

  For general $(m_{1},\cdots,m_{n})\in \mathbb{Z}^n$ such that $X_{1}^{m_{1}}\cdots X_{n}^{m_{n}}$ is a Laurent monomial in $A_q$. According to our above discussion and by (\ref{key}), we have
  \begin{equation*}
     k_{1}^{1}(1-q^{\sum \limits_{t=1}^{n}m_{t}\lambda_{t1}})X_{1}^{m_{1}+1}X_{2}^{m_{2}}\cdots X_{n}^{m_{n}}=\{X_{1},X_{1}^{m_{1}}\cdots X_{n}^{m_{n}}\}=\sum \limits_{j}f_{j}(1-q^{\sum \limits_{t=1}^{n}c_{jt}\lambda_{t1}})X_{1}^{c_{j1}+1}X_{2}^{c_{j2}}\cdots X_{n}^{c_{jn}}
  \end{equation*}
  So we have $\sum \limits_{t=1}^{n}c_{jt}\lambda_{t1}=0$ for any $j\geqslant 2$. Replacing $X_{1}$ by $X_{h}$, $h\in[1,n]$, we obtain that $(c_{j1},\cdots,c_{jn})\Lambda=0$ for any $j\geqslant 2$. However, it contradicts to that  $\Lambda$ is invertible since we have assumed $(c_{j1},\cdots,c_{jn})\not=(0,\cdots,0)$.

  Hence $(c_{j1},\cdots,c_{jn})=(0,\cdots,0)$ for any $j\geqslant 2$.

  Then by (\ref{expen}), we get $g(X_{1}^{m_{1}}\cdots X_{n}^{m_{n}})=f_{1}X_{1}^{m_{1}}\cdots X_{n}^{m_{n}}+f_{2}$, where $f_{1},f_{2}\in \mathbb{Z}[q^{\pm \frac{1}{2}}]$.
  That is, for any cluster Laurent monomial $X$ in $A_q$,
  \begin{equation}\label{gform}
  g(X)=k_{X}X+k_{X}^{\prime},
  \end{equation}
  where $k_{X},k_{X}^{\prime}\in \mathbb{Z}[q^{\pm \frac{1}{2}}]$.\vspace{2mm}

  {\em (ii)}\; For any $g\in \mathscr{P}(A_{q})$, by (\ref{gform}), we define $g^{\prime}$ to be the map  satisfying $g^{\prime}(X)=k_{X}X$ for any cluster Laurent monomial $X\in A_q$ and $g'(a)=0$ for any $a\in \mathbb{Z}[q^{\pm \frac{1}{2}}]$. Trivially, $g'\in \mathscr{P}(A_{q})$. Since Im$(g-g^{\prime})\subseteq Z(A_q)$, we have $g\sim g^{\prime}$.

  By Lemma \ref{I1}, there is a scalar $\mathbb{Z}[q^{\pm\frac{1}{2}}]$-transformation  $g_{0}\in \mathscr{P}(A_{q})$ such that $g_{0}\sim g^{\prime}$. It follows that $g_0\sim g$.
\end{Proof}

Combining Lemma \ref{I1} and Lemma \ref{I2}, we get our main result on inner Poisson structures.
\begin{Theorem}\label{r1}
Let $A_{q}$ be a quantum cluster algebra without coefficients, any inner Poisson structure on $A_{q}$ must be a standard Poison structure.
\end{Theorem}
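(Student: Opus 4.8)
The plan is to obtain Theorem \ref{r1} as a short deduction from Lemma \ref{I2}(ii) together with the correspondence recorded in Theorem \ref{g}, since essentially all of the computational work has already been done in the two preceding lemmas. First I would start from an arbitrary inner Poisson structure $\{-,-\}$ on $A_q$. By the characterization in Theorem \ref{g}, it is represented by some $g\in\mathscr{P}(A_q)$ with $ham(a)=[g(a),-]$ for all $a\in A_q$, so it suffices to understand such a $g$ up to the equivalence $\sim$.

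Next I would invoke Lemma \ref{I2}(ii) to produce a scalar $\mathbb{Z}[q^{\pm\frac12}]$-transformation $g_0\in\mathscr{P}(A_q)$, say $g_0(W)=k_0W$ for a fixed $k_0\in\mathbb{Z}[q^{\pm\frac12}]$, with $g_0\sim g$. The decisive point I would then pin down is the precise shape of this equivalence. Unwinding the proofs of Lemma \ref{I2}(ii) and Lemma \ref{I1}, the intermediate transformations all differ from $g$ only by maps into $Z(A_q)$ and the automorphism witnessing each step is the identity; since $Z(A_q)=\mathbb{Z}[q^{\pm\frac12}]$ is closed under sums, one concludes that the equivalence $g_0\sim g$ is realized with $\tau=\mathrm{id}$, that is, Im$(g-g_0)\subseteq Z(A_q)$.

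From this I would conclude that the two Hamiltonians coincide exactly, not merely up to a Poisson-algebra isomorphism. Indeed, for every $a\in A_q$ the element $g(a)-g_0(a)$ lies in the centre, hence $[g(a),-]=[g_0(a),-]$, so $ham(a)=[g_0(a),-]=[k_0a,-]=k_0[a,-]$. Therefore $\{-,-\}=k_0[-,-]$, which is by definition the standard Poisson structure with parameter $\lambda=k_0\in k=\mathbb{Z}[q^{\pm\frac12}]$, and the theorem follows.

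The only genuinely delicate step is the verification in the second paragraph that the equivalence delivered by Lemma \ref{I2}(ii) comes with $\tau=\mathrm{id}$ and central difference, rather than with an arbitrary automorphism $\tau$: this is exactly what upgrades the conclusion from ``isomorphic to a standard structure'' (which would already follow formally from the bijection of Theorem \ref{g}) to ``is literally a standard structure'' as stated. I do not expect any further computation, because the monomial analysis forcing $g(X)=k_XX+k'_X$ and the centre computation $Z(A_q)=\mathbb{Z}[q^{\pm\frac12}]$ have already been carried out in Lemmas \ref{inner}, \ref{I1} and \ref{I2}.
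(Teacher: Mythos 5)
Your proposal is correct and follows essentially the same route as the paper's proof: Theorem \ref{g} plus Lemmas \ref{I1} and \ref{I2} reduce the given inner structure to a scalar transformation $g_0(W)=k_0W$, whence $ham(W)=k_0[W,-]$. The point you flag as delicate --- upgrading ``isomorphic to a standard structure'' to ``is literally a standard structure'' --- is handled correctly by your tracking of $\tau=\mathrm{id}$ through the lemmas, though it also admits a shorter resolution that explains why the paper's terser proof suffices: any associative algebra automorphism $\tau$ satisfies $\tau([a,b])=[\tau(a),\tau(b)]$ and is $\mathbb{Z}[q^{\pm\frac12}]$-linear, so a Poisson structure isomorphic to $k_0[-,-]$ via $\tau$ is already equal to $k_0[-,-]$.
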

\begin{Proof}
  According to Theorem \ref{g}, any inner Poisson bracket on $A_{q}$ corresponds to a linear transformation $g\in \mathscr{P}(A_{q})$ up to isomorphism. By Lemma \ref{I1} and Lemma \ref{I2},  we can choose a scalar linear transformation $g'$ in the iso-class of $g$, that is, $g'(W)=k_0W$ for a fixed element $k_0\in \mathbb{Z}[q^{\pm \frac{1}{2}}]$ and for any $W\in A_{q}$. It follows that $ham(W)=k_0[W,-]$ for any $W\in A_{q}$, which means the Poisson structure is standard.
\end{Proof}

\vspace{4mm}

\section{On locally inner Poisson structures}
In \cite{LP}, we define the cluster decomposition of a quantum cluster algebra as following. Let $A_{q,I_{1}}, A_{q,I_{2}}$ be two quantum cluster algebras with initial seeds $(\tilde{X}_{I_{1}},\tilde{B}_{I_{1}},\Lambda_{I_{1}})$ and $(\tilde{X}_{I_{2}},\tilde{B}_{I_{2}},\Lambda_{I_{2}})$ respectively, and let $\Theta$ be an $|I_{1}|\times|I_{2}|$ integer matrix satisfying
\begin{equation}\label{condition formula for cluster decomposition}
  \left\{\begin{array}{cc}
         \tilde{B}_{I_{1}}^{\top}\Theta & =O \\
         \Theta\tilde{B}_{I_{2}} & =O.
       \end{array}
       \right.
\end{equation}
Define $A_{q,I_{1}}\bigsqcup_{\Theta} A_{q,I_{2}}$ to be the algebra equivalent to $A_{q,I_{1}}\bigotimes_{\Z[q^{\pm\frac{1}{2}}]} A_{q,I_{2}}$ as a $\Z[q^{\pm\frac{1}{2}}]$-module
with twist multiplication:
\begin{equation}\label{multi}
(a\otimes b)(c\otimes d)=\sum\limits_{i,j}k_{i}l_{j}q^{\frac{1}{2}\bar{r}_{i}^{\top}\Theta \bar{s}_{j}}a\tilde{X}_{I_{1}}^{\bar{s}_{j}}\otimes\tilde{X}_{I_{2}}^{\bar{r}_{i}}d
\end{equation}
for $b=\sum\limits_{i}k_{i}\tilde{X}_{I_{2}}^{\bar{r}_{i}},c=\sum\limits_{j}l_{j}\tilde{X}_{I_{1}}^{\bar{s}_{j}}$, where $\bar{r}_{i},\bar{s}_{j}$ are exponential column vectors.

Let $A_{q,I_{i}}$ be quantum cluster algebras with initial seeds $(\tilde{X}_{I_{i}},\tilde{B}_{I_{i}},\Lambda_{I_{i}})$ for $i\in [1,r]$. $\bigsqcup$ is associative in the sense that
\begin{center}
  $(A_{q,I_{1}}\bigsqcup_{\Theta_{1}} A_{q,I_{2}})\bigsqcup_{\Theta^{\prime}} A_{q,I_{3}}=A_{q,I_{1}}\bigsqcup_{\Theta^{\prime\prime}} (A_{q,I_{2}}\bigsqcup_{\Theta_{3}}A_{q,I_{3}})$,
\end{center}
where
\[\Theta^{\prime}=\begin{pmatrix}
                        \Theta_{2} \\
                        \Theta_{3}
                      \end{pmatrix},\quad
  \Theta^{\prime\prime}=\begin{pmatrix}
                          \Theta_{1} & \Theta_{2}
                        \end{pmatrix}.\]
\begin{Theorem}\cite{LP}
  Let $A_{q}$ be a quantum cluster algebra with initial seed $(\tilde{X},\tilde{B},\Lambda)$ and $\left\{-,-\right\}$ a compatible Poisson bracket on $A_q$. Assume $\Omega$ is the Poisson matrix of the initial cluster with respect to $\{-,-\}$, $\tilde{B}$ has the decomposition $\tilde{B}=\bigoplus\limits_{i=1}^{r}\tilde{B}_{I_{i}}$ with indecomposables $\tilde{B}_{I_{i}}$ for $i\in[1,r]$, and $A_{q,I_{i}}$ is the quantum cluster indecomposable subalgebra of $A_q$ determined by $(\tilde{B}_{I_{i}},\Lambda_{I_{i}})$. Then $A_{q}\cong\bigsqcup\limits_{i} A_{q,I_{i}}$.
\end{Theorem}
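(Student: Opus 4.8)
The plan is to reduce to the case $r=2$ using the associativity of $\bigsqcup$ recorded above, and then to produce an explicit $\Z[q^{\pm\frac12}]$-algebra isomorphism between $A_q$ and $A_{q,I_1}\bigsqcup_{\Theta} A_{q,I_2}$ by matching generators together with their $q$-commutation relations. First I would fix the partitions $[1,m]=R_1\sqcup R_2$ (rows) and $[1,n]=C_1\sqcup C_2$ (columns) induced by the block decomposition $\tilde B=\tilde B_{I_1}\oplus\tilde B_{I_2}$, and write $\Lambda$ in the corresponding block form $\Lambda=\begin{pmatrix}\Lambda_{11}&\Lambda_{12}\\\Lambda_{21}&\Lambda_{22}\end{pmatrix}$, where $\Lambda_{21}=-\Lambda_{12}^{\top}$ by skew-symmetry and $\Lambda_{ii}=\Lambda_{I_i}$. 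The candidate twist is the off-diagonal block $\Theta:=\Lambda_{12}$. The first task is to check that $\Theta$ satisfies (\ref{condition formula for cluster decomposition}): expanding the compatibility identity $\tilde B^{\top}\Lambda=(D\ O)$ in block form, the $(C_1,R_2)$-block reads $\tilde B_{I_1}^{\top}\Lambda_{12}=O$ (the cluster part lands in an off-diagonal entry of the diagonal matrix $D$, the frozen part in $O$), which is exactly $\tilde B_{I_1}^{\top}\Theta=O$; the $(C_2,R_1)$-block gives $\tilde B_{I_2}^{\top}\Lambda_{21}=O$, and transposing together with $\Lambda_{21}=-\Lambda_{12}^{\top}=-\Theta^{\top}$ yields $\Theta\tilde B_{I_2}=O$. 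Hence $A_{q,I_1}\bigsqcup_{\Theta}A_{q,I_2}$ is well-defined, and the same computation over all $r$ blocks supplies the twists $\Theta_j$ appearing in the associativity identity.

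Next I would define the map $\phi$ on the ambient quantum torus of the initial seed, which presents $A_q$ via generators $X^{\pm e_k}$ subject to $X^{e_k}X^{e_l}=q^{\lambda_{kl}}X^{e_l}X^{e_k}$, by sending $X^{e_k}$ for $k\in R_i$ to the corresponding generator of the $i$-th tensor factor and extending multiplicatively. For $k,l$ in the same block this transports the within-block relations (since $\Lambda_{ii}=\Lambda_{I_i}$); for $k\in R_1,\ l\in R_2$ the cross relation $X^{e_k}X^{e_l}=q^{\lambda_{kl}}X^{e_l}X^{e_k}$ must be reproduced by the commutation of the two tensor factors, which in the twisted multiplication (\ref{multi}) is governed precisely by the entries of $\Theta=\Lambda_{12}$. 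Verifying that these two cross-commutation laws agree — with the correct normalizing power of $q$ so that no spurious scalar from the half-coefficient conventions survives — is the computational heart, and it is forced by the equality $\Theta=\Lambda_{12}$. Since both algebras are free $\Z[q^{\pm\frac12}]$-modules on the respective monomials and $\phi$ carries basis to basis, $\phi$ is a module isomorphism, and combined with the relation-matching it is an isomorphism of the ambient localized algebras.

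Then I would promote this to the cluster algebras themselves by showing the cluster structure decouples along the blocks. Because $\tilde B$ is block diagonal, for $k\in C_i$ the exchange column $b_k(t)$ is supported on $R_i$, so $\mu_k(X^{e_k})$ lies in the subalgebra generated by the block-$i$ generators; moreover the matrix mutations (\ref{matrix mutation}) and (\ref{lambdamutation}) preserve block-diagonality of $\tilde B$ and preserve compatibility of $(\tilde B,\Lambda)$, so the off-diagonal blocks of every mutated $\Lambda$ again satisfy (\ref{condition formula for cluster decomposition}) and continue to furnish the twist across all seeds. Consequently each cluster variable from any seed is supported in a single block, so $A_q$ is generated over $\Z[q^{\pm\frac12}]$ by the two families of block cluster variables with cross relations dictated by $\Theta$ — which is exactly the description of $A_{q,I_1}\bigsqcup_{\Theta}A_{q,I_2}$, and $\phi$ restricts to the desired isomorphism. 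Finally, applying the identical block computation to the Poisson matrix $\Omega$, whose compatibility with $\tilde B$ forces its off-diagonal blocks to split in the same way, shows that $\phi$ also transports the compatible Poisson bracket; iterating over the $r$ indecomposable blocks via associativity gives $A_q\cong\bigsqcup_i A_{q,I_i}$.

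The main obstacle I anticipate is the multiplicativity bookkeeping in the second step: reconciling the single exponent $\tfrac12\,u\Lambda v^{\top}$ produced by the defining relation of the quantum torus with the twist factor of (\ref{multi}), which pairs only the ``inner'' exponents of the two tensor factors through $\Theta$. Choosing the normalization of $\phi$ (an auxiliary quadratic $q$-power in the block exponents) so that the residual cross term involving $\Lambda_{12}$ is absorbed rather than producing an extra scalar is the delicate point; it is precisely the block form of the compatibility identity $\tilde B^{\top}\Lambda=(D\ O)$, guaranteeing that no cross terms other than those encoded by $\Theta$ survive, that makes this bookkeeping close.
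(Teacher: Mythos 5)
You should first note that this theorem is quoted from \cite{LP}: the present paper gives no proof of it at all, so there is no internal argument to compare yours against, and I can only evaluate your proposal on its own merits. On those merits the strategy is sound and is the natural one. The block expansion of the compatibility condition (\ref{B^T Lambda=DO}) does give $\tilde B_{I_1}^{\top}\Lambda_{12}=O$, and transposing the $(C_2,R_1)$-block together with $\Lambda_{21}=-\Lambda_{12}^{\top}$ gives $\Lambda_{12}\tilde B_{I_2}=O$, so a twist proportional to $\Lambda_{12}$ satisfies (\ref{condition formula for cluster decomposition}). The mutation rules (\ref{matrix mutation}) and (\ref{lambdamutation}) do preserve block-diagonality of $\tilde B$, leave $\Lambda_{11}$ untouched under block-$2$ mutations (the ``otherwise'' case), and mutate $\Lambda_{11}$ exactly as $\Lambda_{I_1}$ mutates inside $A_{q,I_1}$ when $k\in C_1$ (since $b_{lk}=0$ for $l\in R_2$ kills the extra summands); hence every cluster variable of $A_q$ is a Laurent polynomial supported in a single block, the seed data restricts correctly, and the generating sets match up as you claim. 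The reduction to $r=2$ by associativity is fine, and the Poisson hypotheses ($\{-,-\}$ and $\Omega$) are never needed: the stated conclusion is a pure algebra isomorphism, so your final step about transporting the bracket is superfluous rather than wrong.

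The one point you must actually pin down, rather than merely flag, is the constant in $\Theta$. Under the conventions as reproduced in this paper --- reading the twist in (\ref{multi}) as $q^{\frac12\bar s_j^{\top}\Theta\bar r_i}$, which is the only dimensionally consistent reading since $\Theta$ is $|I_1|\times|I_2|$ while $\bar r_i$ is an $|I_2|$-vector --- your normalized map $X^{(u_1,u_2)}\mapsto q^{-\frac12 u_1\Lambda_{12}u_2^{\top}}\,X_{I_1}^{u_1}\otimes X_{I_2}^{u_2}$ is multiplicative precisely when $\Theta=-2\Lambda_{12}$, not when $\Theta=\Lambda_{12}$: comparing the relation $X^{e_k}X^{e_l}=q^{\lambda_{kl}}X^{e_l}X^{e_k}$ (for $k\in R_1$, $l\in R_2$) with the cross-commutation $(X^{e_k}\otimes 1)(1\otimes X^{e_l})=q^{-\frac12\Theta_{kl}}(1\otimes X^{e_l})(X^{e_k}\otimes 1)$ in the twisted product forces $\Theta_{kl}=-2\lambda_{kl}$. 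Since (\ref{condition formula for cluster decomposition}) is linear in $\Theta$, this rescaling costs nothing structurally, so I count it as a loose constant rather than a genuine gap; but your assertion that the matching ``is forced by the equality $\Theta=\Lambda_{12}$'' is false under these conventions, and a complete writeup should carry out that one monomial computation explicitly and state the resulting $\Theta$.
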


We call $A_q\cong\bigsqcup\limits_{i=1}^r A_{q,I_{i}}$ a {\bf cluster decomposition} of $A_{q}$.

In particular, when $A_{q}$ is a quantum cluster algebra without coefficients, $B$ is invertible. Hence by (\ref{condition formula for cluster decomposition}), we have $\Theta=O$. So, from (\ref{multi}), we also obtain $(a\otimes b)(c\otimes d)=ac\otimes bd$ in this case, which means the cluster decomposition is exactly tensor decomposition $A_{q}=\bigotimes_{i=1}^{r}A_{q,I_{i}}$.

 We generalize inner Poisson structures to locally inner structures in the sense of cluster decomposition.

\begin{Definition}
  Let $A_{q}$ be a quantum cluster algebra with the cluster decomposition $A_{q}=\bigsqcup_{i=1}^{r}A_{q,I_{i}}$.

  (1)A Poisson structure $\left\{-,-\right\}$ on $A_{q}$ is said to be \textbf{locally inner} if for any $a\in A_{q}$ and  $i\in[1,r]$, there is $a_{i}\in A_{q,I_{i}}$ such that $ham(a)|_{A_{q,I_i}}=[a_i,-]$.

  (2)A Poisson structure $\left\{-,-\right\}$ on $A_{q}$ is a \textbf{locally standard Poisson structure} if $\{X_{i},X_{j}\}=0$ when $i$ and $j$ are from different $I_{r}$ and $\left\{-,-\right\}$ is of standard poisson structure on each $X_{I_{r}}$, i.e, $\{X_{i},X_{j}\}=a_{r}[X_{i},X_{j}]$, where $i,j\in I_{r},a_{r}\in \mathbb{Z}[q^{\pm\frac{1}{2}}]$.
\end{Definition}

\begin{Proposition}\label{r2}
  Let $A_{q}$ be a quantum cluster algebra without coefficients, any locally inner Poisson structure on $A_{q}$ is locally standard.
\end{Proposition}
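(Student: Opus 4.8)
The plan is to pass to the cluster decomposition $A_q=\bigotimes_{i=1}^r A_{q,I_i}$, which in the coefficient-free case is an honest tensor decomposition (as noted above, $\Theta=O$ and the twist multiplication degenerates to $ac\otimes bd$), to restrict the given locally inner structure to each tensor factor, to recognize each restriction as an \emph{inner} Poisson structure on a smaller coefficient-free quantum cluster algebra, and then to invoke Theorem \ref{r1}. The cross terms between distinct factors are then handled separately.

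First I would record the structure of the factors. Since $A_q$ has no coefficients, $B$ is invertible and $B^{\top}\Lambda=D$ gives $\Lambda=(B^{-1})^{\top}D$; because $B=\bigoplus_i B_{I_i}$ is block diagonal, so is $B^{-1}$, hence $\Lambda=\bigoplus_i\Lambda_{I_i}$ is block diagonal with each $\Lambda_{I_i}$ invertible. Thus every $A_{q,I_i}$ is again a quantum cluster algebra without coefficients, so Theorem \ref{r1} applies to it. Next, for the intra-block part, fix $i$ and take any $a\in A_{q,I_i}$. By the locally inner hypothesis there is $a_i\in A_{q,I_i}$ with $ham(a)|_{A_{q,I_i}}=[a_i,-]$, so for $b\in A_{q,I_i}$ we get $\{a,b\}=[a_i,b]\in A_{q,I_i}$. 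Hence the restriction of $\{-,-\}$ to $A_{q,I_i}$ is a well-defined Poisson bracket which is inner, and Theorem \ref{r1} yields a constant $\kappa_i\in\mathbb{Z}[q^{\pm\frac12}]$ with $\{X,Y\}=\kappa_i[X,Y]$ for all $X,Y\in A_{q,I_i}$. In particular $\{X_i,X_j\}=a_r[X_i,X_j]$ whenever $i,j\in I_r$ (with $a_r=\kappa_r$), which is exactly the standard-on-each-block requirement.

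It then remains to show the cross terms vanish. Take $i\in I_s$ and $j\in I_t$ with $s\neq t$. Applying the locally inner hypothesis to $X_i$ relative to $I_t$ gives $c\in A_{q,I_t}$ with $\{X_i,X_j\}=[c,X_j]\in A_{q,I_t}$; applying it to $X_j$ relative to $I_s$ gives $c'\in A_{q,I_s}$ with $\{X_j,X_i\}=[c',X_i]\in A_{q,I_s}$. Since $\{X_i,X_j\}=-\{X_j,X_i\}$, this element lies in $A_{q,I_s}\cap A_{q,I_t}$, which in the tensor decomposition is exactly the scalars $\mathbb{Z}[q^{\pm\frac12}]=Z(A_q)$ (by Lemma \ref{I1}). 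Expanding $c=\sum_a c_a X^{a}$ over the Laurent monomials of block $I_t$ and using $X^{a}X^{b}=q^{\frac12 a\Lambda_{I_t} b^{\top}}X^{a+b}$, we have $[c,X_j]=\sum_a c_a\bigl(q^{\frac12 a\Lambda_{I_t}e_j^{\top}}-q^{-\frac12 a\Lambda_{I_t}e_j^{\top}}\bigr)X^{a+e_j}$; for this to be a scalar only the term with $a=-e_j$ could survive, but there the coefficient vanishes since $e_j\Lambda_{I_t}e_j^{\top}=0$. Hence $\{X_i,X_j\}=0$, and combining the two parts gives precisely the definition of a locally standard Poisson structure.

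The main obstacle I expect is this last cross-term step: one must verify that the restricted Hamiltonians genuinely land in the correct tensor factor (so that $\{X_i,X_j\}$ can be trapped in $A_{q,I_s}\cap A_{q,I_t}$), and that a commutator forced into the scalars must actually be zero. The latter is not automatic and relies on the precise commutation rule of the quantum torus, which rules out a nonzero scalar-valued commutator. Once these bookkeeping points are secured, the intra-block and inter-block conclusions assemble into the statement.
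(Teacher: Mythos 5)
Your proposal is correct and follows essentially the same route as the paper's proof: restrict the bracket to each tensor factor, invoke Theorem \ref{r1} there, and kill the cross terms by trapping $\{X_i,X_j\}$ in $A_{q,I_s}\cap A_{q,I_t}=\mathbb{Z}[q^{\pm\frac12}]$ and observing that a quantum-torus commutator has no nonzero constant term. Your explicit expansion $[c,X_j]=\sum_a c_a\bigl(q^{\frac12 a\Lambda_{I_t}e_j^{\top}}-q^{-\frac12 a\Lambda_{I_t}e_j^{\top}}\bigr)X^{a+e_j}$ and the block-diagonality of $\Lambda$ simply make explicit two points the paper states without computation.
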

\begin{proof}
  Assume $A_{q}$ has the cluster decomposition $A_{q}=\bigotimes_{i=1}^{r}A_{q,I_{i}}$ and $\left\{-,-\right\}$ is a locally inner Poisson bracket on $A_{q}$. According to the definition, $\left\{-,-\right\}$ is inner when restricted on each $A_{q,I_{i}}$. Hence by Theorem \ref{r1}, $ham(a)\mid_{A_{q,I_{i}}}=\lambda_{i}[a,-]$ for some $\lambda_{i}\in \mathbb{Z}[q^{\pm\frac{1}{2}}]$ and any $a\in A_{q,I_{i}}$. Moreover, for any $a\in A_{q,I_{i}},a^{\prime}\in A_{q,I_{j}}$ and $i\neq j$,
  \[\left\{a,a^{\prime}\right\}=[a_{j},a^{\prime}]\in A_{q,I_{j}},\; \;
  \text{and}\;\;
  \left\{a,a^{\prime}\right\}=[a,a_{i}^{\prime}]\in A_{q,I_{i}}.\]
 So, $\left\{a,a^{\prime}\right\}\in A_{q,I_{i}}\bigcap A_{q,I_{j}}=\Z[q^{\pm\frac{1}{2}}]$ according to the independence of cluster variables and $I_i\cap I_j=\emptyset$. And, note that the expansions of $[a_{j},a^{\prime}]$ and $[a,a_{i}^{\prime}]$ will not contain non-zero constant terms in $\Z[q^{\pm\frac{1}{2}}]$ due to the definitions of the operation $[\;,\;]$ and quantum torus. Thus, $\left\{a,a^{\prime}\right\}=0$.

  Therefore, for any $a\in A_{q,I_{i}}$,
  \begin{equation*}
    ham(a)\mid_{A_{q,I_{j}}}=\left\{
    \begin{array}{cc}
      \lambda_{i}[a,-] & i=j; \\
      0 & i\neq j.
    \end{array}
    \right.
  \end{equation*}
  Then the Poisson structure is exactly locally standard.
\end{proof}

We have the following definition and result in \cite{LP}.
\begin{Definition}
    (1)\; For a quantum cluster algebra $A_{q}$, one of its extended cluster $\tilde X(t)=(X_{1},\cdots,X_{m})$ at $t\in \mathbb{T}_n$ is said to be \textbf{log-canonical} with respect to a Poisson structure $(A_{q},\cdot,\left\{-,-\right\})$ if $\left\{X_{i},X_{j} \right\}=\omega_{ij}X^{e_{i}+e_{j}}$, where $\omega_{ij}\in \mathbb{Z}[q^{\pm \frac{1}{2}}]$ for any $i,j\in[1,m]$.

    (2)\;  A Poisson structure $\left\{-,-\right\}$ on a quantum cluster algebra $A_{q}$ is called \textbf{compatible} with $A_{q}$ if all clusters in $A_{q}$ are log-canonical with respect to $\left\{ -,-\right\}$.
\end{Definition}

\begin{Theorem}\label{compatible}\cite{LP}
  Let $A_{q}$ be a quantum cluster algebra without coefficients. Then a Poisson structure $\left\{-,-\right\}$ on $A_{q}$ is compatible with $A_{q}$ if and only if it is locally standard on $A_{q}$.
\end{Theorem}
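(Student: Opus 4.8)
The plan is to prove the two implications separately, using the tensor description of the cluster decomposition recalled above ($A_{q}=\bigotimes_{r}A_{q,I_{r}}$ since $\Theta=O$ without coefficients) together with a functional-equation analysis carried out on the initial cluster, where invertibility of $\Lambda$ does all the work.

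For the direction ``locally standard $\Rightarrow$ compatible'', suppose $\{-,-\}$ is locally standard, so on each factor $\{X_{i},X_{j}\}=a_{r}[X_{i},X_{j}]$ for $i,j\in I_{r}$ while cross-factor brackets vanish. I would first record that the standard bracket $\lambda[-,-]$ is compatible on a single quantum cluster algebra: at every vertex $t$ the extended cluster satisfies the quantum-torus relation $X_{t}^{e}X_{t}^{f}=q^{\frac{1}{2}e\Lambda(t)f^{\top}}X_{t}^{e+f}$, whence for cluster variables at $t$ one has $[X_{i}',X_{j}']=(q^{\frac{1}{2}\lambda_{ij}(t)}-q^{-\frac{1}{2}\lambda_{ij}(t)})X_{t}'^{e_{i}+e_{j}}$, which is already log-canonical. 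Since $B=\bigoplus_{r}B_{I_{r}}$ is block diagonal, a mutation in a direction $k\in I_{r}$ leaves all entries outside the $I_{r}$-block unchanged, so every cluster of $A_{q}$ is a product of clusters of the factors; on each factor the bracket is a scalar multiple of the commutator and is log-canonical by the observation, while for $X_{i}'\in A_{q,I_{r}}$ and $X_{j}'\in A_{q,I_{s}}$ with $r\neq s$ the derivation property together with the Laurent phenomenon (each cluster variable of block $s$ is a Laurent polynomial in the initial $X_{m}$, $m\in I_{s}$) gives $\{X_{i}',X_{j}'\}=0$. Hence every cluster is log-canonical and $\{-,-\}$ is compatible.

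For the converse, assume $\{-,-\}$ is compatible; in particular the initial cluster is log-canonical, $\{X_{i},X_{j}\}=\omega_{ij}X^{e_{i}+e_{j}}$ with $\omega_{ij}\in\Z[q^{\pm\frac{1}{2}}]$. Writing $\phi_{i}(v)=e_{i}\Lambda v^{\top}$, the Leibniz rule shows $ham(X_{i})$ sends each Laurent monomial to a scalar multiple of a single monomial, $\{X_{i},X^{v}\}=P_{i}(v)X^{e_{i}+v}$ with $P_{i}(e_{j})=\omega_{ij}$. Expanding $\{X_{i},X^{v+w}\}$ through the two factorizations $X^{v+w}=q^{-\frac{1}{2}v\Lambda w^{\top}}X^{v}X^{w}=q^{-\frac{1}{2}w\Lambda v^{\top}}X^{w}X^{v}$ yields
\[ P_{i}(v+w)=P_{i}(v)q^{\phi_{i}(w)/2}+P_{i}(w)q^{-\phi_{i}(v)/2} \]
together with the identity obtained by exchanging $v$ and $w$; their difference gives
\[ P_{i}(v)\bigl(q^{\phi_{i}(w)/2}-q^{-\phi_{i}(w)/2}\bigr)=P_{i}(w)\bigl(q^{\phi_{i}(v)/2}-q^{-\phi_{i}(v)/2}\bigr). \]
Since $\Lambda$ is invertible, $e_{i}\Lambda\neq 0$, so there is $j$ with $\lambda_{ij}\neq 0$; fixing such $j$ forces $P_{i}(v)=\lambda_{i}\bigl(q^{\phi_{i}(v)/2}-q^{-\phi_{i}(v)/2}\bigr)$ for the single scalar $\lambda_{i}=\omega_{ij}/(q^{\lambda_{ij}/2}-q^{-\lambda_{ij}/2})$, and in particular $P_{i}(v)=0$ whenever $\phi_{i}(v)=0$. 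Antisymmetry $\omega_{ij}=-\omega_{ji}$ then gives $(\lambda_{i}-\lambda_{j})(q^{\lambda_{ij}/2}-q^{-\lambda_{ij}/2})=0$, so $\lambda_{i}=\lambda_{j}$ whenever $\lambda_{ij}\neq 0$. Finally, $B^{\top}\Lambda=D$ with $B,D$ block diagonal forces $\Lambda=\bigoplus_{r}(B_{I_{r}}^{\top})^{-1}D_{I_{r}}$ to be block diagonal with the same blocks, so $\lambda_{ij}=0$ (hence $\omega_{ij}=0$) across blocks; and each $\Lambda_{I_{r}}$ is indecomposable, since a nontrivial skew-symmetric splitting of $\Lambda_{I_{r}}$ would split $B_{I_{r}}^{\top}=D_{I_{r}}\Lambda_{I_{r}}^{-1}$. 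Connectedness of the graph $\{\lambda_{ij}\neq 0\}$ on $I_{r}$ then makes $\lambda_{i}$ constant, equal to some $a_{r}$, whence $\{X_{i},X_{j}\}=a_{r}[X_{i},X_{j}]$ within $I_{r}$ and $0$ across, i.e.\ $\{-,-\}$ is locally standard.

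The main obstacle is the converse: extracting from log-canonicity the rigid form $P_{i}(v)=\lambda_{i}(q^{\phi_{i}(v)/2}-q^{-\phi_{i}(v)/2})$, which is exactly where invertibility of $\Lambda$ (the no-coefficients hypothesis) is indispensable, and then transferring the indecomposability of each $B_{I_{r}}$ to $\Lambda_{I_{r}}$ so as to upgrade the local relation ``$\lambda_{i}=\lambda_{j}$ when $\lambda_{ij}\neq 0$'' to genuine constancy of the scalar on an entire block. A secondary technical point to check is that $\lambda_{i}$, a priori an element of the fraction field, indeed lies in $\Z[q^{\pm\frac{1}{2}}]$; this follows from $\omega_{ij}\in\Z[q^{\pm\frac{1}{2}}]$ together with the consistency of the defining ratios across the block.
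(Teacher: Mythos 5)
Your proposal cannot be checked against an internal argument: the paper states this theorem as a quotation from \cite{LP} and contains no proof of it, so your write-up has to stand on its own. Taken on its own terms, the skeleton is sound. The easy direction (locally standard $\Rightarrow$ compatible) is fine: since $B=\bigoplus_{r}B_{I_{r}}$, mutation in a direction $k\in I_{r}$ only touches the $I_{r}$-block, so every cluster variable lies in a single factor $A_{q,I_{r}}$, and a bracket that is $a_{r}[-,-]$ on each factor and zero across factors is log-canonical on every cluster because each cluster satisfies its own quantum-torus relations. The core of the converse --- well-definedness of $P_{i}$, the functional equation $P_{i}(v)\bigl(q^{\phi_{i}(w)/2}-q^{-\phi_{i}(w)/2}\bigr)=P_{i}(w)\bigl(q^{\phi_{i}(v)/2}-q^{-\phi_{i}(v)/2}\bigr)$, the use of invertibility of $\Lambda$ to get $P_{i}(v)=\lambda_{i}\bigl(q^{\phi_{i}(v)/2}-q^{-\phi_{i}(v)/2}\bigr)$, and the transfer of block structure and indecomposability from $B$ to $\Lambda=(B^{\top})^{-1}D$ --- is correct; note that it only uses log-canonicity of the \emph{initial} cluster, so it actually proves more than is asked.

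The genuine gap is the point you dismiss as ``secondary'': integrality of the scalars. It does \emph{not} follow from $\omega_{ij}\in\Z[q^{\pm\frac{1}{2}}]$ and consistency of the ratios that $\lambda_{i}\in\Z[q^{\pm\frac{1}{2}}]$. Concretely, take $n=2$, $B=\begin{pmatrix}0&2\\-2&0\end{pmatrix}$, $\Lambda=\begin{pmatrix}0&1\\-1&0\end{pmatrix}$ (a compatible pair, $B^{\top}\Lambda=2I$), and define $\{a,b\}=\frac{1}{q^{1/2}-q^{-1/2}}[a,b]$. On monomials this gives $\{X^{u},X^{v}\}=[u\Lambda v^{\top}]\,X^{u+v}$, where $[N]=\frac{q^{N/2}-q^{-N/2}}{q^{1/2}-q^{-1/2}}\in\Z[q^{\pm\frac{1}{2}}]$ is the quantum integer; hence this is a genuine $\Z[q^{\pm\frac{1}{2}}]$-bilinear Poisson bracket on $A_{q}$, and every cluster is log-canonical with coefficients $[\lambda_{ij}(t)]\in\Z[q^{\pm\frac{1}{2}}]$, i.e.\ it is compatible in the paper's sense. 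Yet $\{X_{1},X_{2}\}=X^{e_{1}+e_{2}}$ while $[X_{1},X_{2}]=(q^{1/2}-q^{-1/2})X^{e_{1}+e_{2}}$, so no $a\in\Z[q^{\pm\frac{1}{2}}]$ satisfies $\{X_{1},X_{2}\}=a[X_{1},X_{2}]$: your $\lambda_{i}$ here equals $\frac{1}{q^{1/2}-q^{-1/2}}$, which is not a Laurent polynomial. So with the definitions exactly as printed in this paper, your integrality claim is false, not merely unproved. What your argument really establishes is the theorem with the scalars $a_{r}$ allowed to lie in the fraction field $\Q(q^{\frac{1}{2}})$ (equivalently, with ``standard'' read over that field, which is presumably the convention of \cite{LP}); you should either adopt that convention explicitly, or strengthen the definition of compatibility, rather than assert that integrality ``follows''.
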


Since a locally standard Poisson structure is evidently locally inner, combining Proposition  \ref{r2} and Theorem  \ref{compatible}, we have the final conclusion:
\begin{Theorem}\label{r3}
  Let $A_{q}$ be a quantum cluster algebra without coefficients and $\left\{-,-\right\}$ a Poisson structure on $A_{q}$. The following statements are equivalent:

  (1)\; $\left\{-,-\right\}$ is locally standard.

  (2)\; $\left\{-,-\right\}$ is locally inner.

  (3)\; $\left\{-,-\right\}$ is compatible with $A_{q}$.

\end{Theorem}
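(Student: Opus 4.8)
\textbf{Proof proposal for Theorem \ref{r3}.}

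The plan is to establish the three-way equivalence by closing a cycle of implications, leaning on the two heavy results already available in the excerpt: Proposition \ref{r2} (locally inner $\Rightarrow$ locally standard) and Theorem \ref{compatible} from \cite{LP} (compatible $\Leftrightarrow$ locally standard). With these in hand, the logical skeleton is short, so the bulk of the work is bookkeeping rather than new analysis. Concretely, I would prove $(1)\Rightarrow(2)$, then $(2)\Rightarrow(1)$, and finally note that $(1)\Leftrightarrow(3)$ is exactly the statement of Theorem \ref{compatible}. This routes every pair of conditions through condition $(1)$, which is the most tractable to check directly against the definitions.

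First I would dispatch $(1)\Rightarrow(2)$, which is the easy direction and is already flagged in the text as ``evident.'' The point is that a locally standard structure restricts on each indecomposable factor $A_{q,I_i}$ to a scalar multiple $a_i[-,-]$ of the commutator, and across different factors the brackets vanish; so for any $a\in A_q$ and any index $i$, the Hamiltonian $ham(a)$ restricted to $A_{q,I_i}$ agrees with $[a'_i,-]$ for a suitable $a'_i\in A_{q,I_i}$ (taking $a'_i$ to be a scalar multiple of the $A_{q,I_i}$-component of $a$, and $a'_i=0$ when $a$ lies in a different factor). That is precisely the locally inner condition. This step is a direct unwinding of the two definitions and should cost only a few lines.

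The implication $(2)\Rightarrow(1)$ is exactly Proposition \ref{r2}, so nothing new is needed there; I would simply cite it. Then $(1)\Leftrightarrow(3)$ is verbatim Theorem \ref{compatible}, again a citation. Assembling: $(2)\Rightarrow(1)$ by Proposition \ref{r2}, $(1)\Rightarrow(2)$ by the direct argument above, so $(1)\Leftrightarrow(2)$; and $(1)\Leftrightarrow(3)$ by Theorem \ref{compatible}. Chaining these gives the full equivalence of $(1)$, $(2)$, $(3)$.

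Honestly, there is no serious obstacle left at this stage, because the genuine mathematical content has been absorbed into Proposition \ref{r2} and Theorem \ref{compatible}. The only point demanding a little care is the forward direction $(1)\Rightarrow(2)$: one must confirm that ``locally inner'' as defined requires the implementing element $a_i$ to lie \emph{inside} the factor $A_{q,I_i}$, and verify that the scalar-commutator form coming from local standardness does produce such an $a_i$ for every $a\in A_q$ (including $a$ drawn from other factors, where the restricted Hamiltonian is zero and $a_i=0$ works). Once that compatibility of definitions is checked, the theorem follows by formally stringing together the cited equivalences.
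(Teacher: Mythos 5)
Your proposal follows exactly the paper's route: the paper proves Theorem \ref{r3} in one line by observing that locally standard is ``evidently'' locally inner (your $(1)\Rightarrow(2)$), citing Proposition \ref{r2} for $(2)\Rightarrow(1)$, and citing Theorem \ref{compatible} for $(1)\Leftrightarrow(3)$. Your write-up is, if anything, slightly more careful than the paper's, since you spell out the witness $a_i$ (scalar multiple of the component, or $0$ across factors) behind the implication that the paper dismisses as evident.
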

\vspace{4mm}
{\bf Acknowledgements:}\; {\em This project is supported by the National Natural Science Foundation of China(No.11671350) and the Zhejiang Provincial Natural Science Foundation of China (No. LY19A010023).}


\begin{thebibliography}{99}

\bibitem{BZ} A.Berenstein, A.Zelevinsky, Quantum cluster algebras. Advances in Mathematics, 195(2005): 405-455.

\bibitem{G} M.Gerstenhaber, On the deformation of rings and algebras. Ann.Math, 79(1964), 59-103.

\bibitem{GLS} C.Geiss, B.Leclerc, J.Schroer, Cluster structures on quantum coordinate rings, Selecta Math. 19 (2013) 337¨C397, arXiv:1104.0531.

\bibitem{GSV} M.Gekhtman, M.Shapiro, A.Vainshtein. Cluster algebras and poisson geometry. Mathematical Surveys and Monographs Volume 167. American Mathematical Society Providence, Rhode Island, 2010.

\bibitem{GY} K.R.Goodearl and M.T.Yakimov, Quantum cluster algebra structures on quantum nilpotent algebras, Memoirs of the American Mathematical Society 247(2016), no. 1169, arXiv:1309.7869.

\bibitem{LP} F.Li, J.Pan, Poisson structure and second quantization of quantum cluster algebras, arxiv: 2003.12257v3

\bibitem{N} T.Nakanishi, Difference equations and cluster algebras I: Poisson bracket for integrable difference equations. in Infinite Analysis 2010 ¨C Developments in Quantum Integrable Systems, RIMS Kokyuroku Bessatsu, Vol. B28, Res. Inst. Math. Sci. (RIMS), Kyoto, 2011, 63-88, arXiv:1012.5574.

\bibitem{YYZ} Y.Yao, Y.Ye, P.Zhang, Quiver Poisson algebras. J.Algebra, 312 (2007), 570-589.
\end{thebibliography}
\end{document}